\documentclass[11pt]{article}

\usepackage[pagebackref,colorlinks=true,urlcolor=blue,linkcolor=red,citecolor=red]{hyperref}

\usepackage{epsfig, graphics, graphicx}
\usepackage{subcaption, comment, bm}
\usepackage[percent]{overpic}
\usepackage{float}
\usepackage{amssymb,bm}
\usepackage{amsthm}
\usepackage{color}
\usepackage{amsmath}
\usepackage[]{amsfonts}
\usepackage{fancyhdr}
\usepackage[]{graphicx,wrapfig}
\usepackage{enumitem}
\usepackage{array}

\newtheorem{theorem}{Theorem}[]
\newtheorem{lemma}[theorem]{Lemma}

\newtheorem{corollary}[theorem]{Corollary}

\newcommand{\cout}[1]{}

\voffset 0.2cm
\hoffset 0.2cm
\textheight 22cm
\textwidth 16.0cm
\topmargin -1cm
\oddsidemargin 0.0cm
\evensidemargin 0.3cm

\usepackage{graphicx}
\usepackage[dvipsnames]{xcolor}

\newcommand{\Lc}{\mathcal{L}}

\newcommand{\Nc}{\mathcal{N}}

\newcommand{\Rb}{\mathbb{R}}
\newcommand{\Sb}{\mathbb{S}}

\renewcommand{\O}{\Omega}

\renewcommand{\o}{\omega}
\usepackage{amsthm}

\usepackage{amsfonts}

\newcommand{\Beq}{\begin{equation}}
\newcommand{\Eeq}{\end{equation}}
\newcommand{\beq}{\begin{equation*}}
\newcommand{\eeq}{\end{equation*}}
\newcommand{\bal}{\begin{align}}
\newcommand{\eal}{\end{align}}

\graphicspath{{./Figs/}}

\newtheorem{Prop}{Proposition}

\title{\vspace{-1cm} Determining time-dependent convection and density terms in convection-diffusion equation using partial data}

\author{Anamika Purohit\thanks{Department of Mathematics, Indian Institute of Technology, Gandhinagar, Gujarat, India. \url{anamika.purohit@iitgn.ac.in }}}

\begin{document}
\date{}
\maketitle
\begin{abstract}
In this article, we study an inverse boundary value problem for the time-dependent convection-diffusion equation. We use the nonlinear Carleman weight to recover the time-dependent convection term and time-dependent density coefficient uniquely. Nonlinear weight allows us to prove the uniqueness of the coefficients by making measurements on a possibly very small subset of the boundary. We proved that the convection term and the density coefficient can be recovered up to the natural gauge from the knowledge of the Dirichlet to Neumann map measured on a very small open subset of the boundary. 
\end{abstract}
\vspace{-5mm}

\section{Introduction}\label{Introduction}
Let $\Omega$ be an open and bounded subset of $\mathbb{R}^n$($n\geq 3$) with a smooth boundary $\partial \O$. For $T > 0$, $\Omega_T := (0,T)\times \Omega$ denotes the open solid cylinder with base $\O$ and height $T$ and its lateral boundary is denoted by $\partial\Omega_T := (0,T)\times \partial\Omega$. For $A:= (A_1, \cdots, A_n)$ with each  $A_j \in W^{1,\infty}(\Omega_T)$ and $q \in L^{\infty}(\Omega_T)$, consider the following initial boundary value problem for the convection-diffusion equation
\begin{align}
 \label{eq:1.1}
  \left\{
	\begin{array}{ r l l }
   \displaystyle \mathcal{L}_{A,q}u(t,x) := \bigg[\partial_t -  \sum_{j=1}^{n}(\partial_j+ A_j(t,x))^2 + q(t,x)\bigg]u(t,x) &= 0, \quad &(t,x)\in \Omega_T \\
	 u(0,x) &= 0,  \quad & x \in \Omega   \\
	 u(t,x) &= f(t,x), \quad & (t,x) \in \partial\Omega_T.
	\end{array}
	\right.
\end{align}
To discuss the well-posedness of the forward problem, which would help us to define our boundary operators,  we start with defining the following spaces which were initially used in \cite{caro2018determination}:
\begin{align*}
    \mathcal{K}_0 &:= \{f|_{\partial\Omega_T} : f \in L^2(0,T;H^1(\Omega)) \cap H^1(0,T:H^{-1}(\Omega)) \text{ and } f(0,x)=0, \text{ for } x\in \Omega\}.\\
    \mathcal{H}_T &:= \{g|_{\partial\Omega_T} : g \in H^1(0,T;H^1(\Omega))  \text{ and } g(T,x)=0, \text{ for }  x\in \Omega\}.
\end{align*}
As shown in \cite{caro2018determination} (see also \cite{lions1968problèmes,sahoo2019partial}) that the system \eqref{eq:1.1} admits a unique solution $u  \in L^2(0,T;H^1(\Omega)) \cap H^1(0,T;H^{-1}(\Omega))$ for $f\in\mathcal{K}_0$. Following the same references, we also define the operator $\mathcal{N}_{A,q}$ by
\begin{align}
\label{eq:1.2}
\langle \mathcal{N}_{A,q}u,v|_{\partial\Omega_T}\rangle := \int_{\Omega_T} (-u \partial_t \overline{v}  + \nabla u\cdot \nabla \overline{v} + 2uA\cdot \nabla \overline{v} + (\nabla\cdot A)u \overline{v} - |A|^2u\overline{v} + qu\overline{v} )dx dt
\end{align}
where $v \in H^1(\Omega_T)$ is such that $v(T,x) = 0$ for $x\in \Omega$.\\
\vspace{.5mm}
\\
The operator $\Nc_{A, q}$ takes the following simple form for smooth enough $A, q$, and $f$:
\begin{align*}
\mathcal{N}_{A,q}u = \left.(\partial_\nu u + 2(\nu\cdot A) u)\right|_{\partial\Omega_T}
\end{align*}
where $\nu$ denotes the outward unit normal vector to $\partial\Omega$. 
This motivates to define the Dirichlet to Neumann (DN) map $\Lambda_{A,q}:\mathcal{K}_0 \to \mathcal{H}^*_T$ by
\begin{align}
\label{eq:1.3}
    \Lambda_{A,q}(f) := \mathcal{N}_{A,q}u = \left.(\partial_\nu u + 2(\nu\cdot A) u)\right|_{\partial\Omega_T}.
\end{align}
where $\mathcal{H}^*_T$ denotes the dual of space $\mathcal{H}_T$ and $u$ is the solution to \eqref{eq:1.1} with Dirichlet boundary data equal to $f$. 

The present article focuses on the unique recovery of the coefficients $A$ and $q$ appearing in 
$\mathcal{L}_{A,q}$ from the knowledge of $\Lambda_{A,q}$ measured on a suitable  subset of the boundary. We can show that the full recovery of the convection term $A$ and $q$ is not possible since $\Lambda_{A+\nabla \Psi,q+\partial_t\Psi} = \Lambda_{A,q}$ when $\Psi \in W_0^{2,\infty}(\Omega_T)$. Therefore, the best we can hope is to recover $A$ and $q$ up to this natural gauge.

This problem of determining coefficients in parabolic partial differential equations by boundary measurements is an important problem and has been studied by many authors. Cannon, \cite{cannon1963determination} determined an unknown coefficient appearing in a one-dimensional parabolic partial differential equation. Isakov \cite{isakov1991completeness} studied the problem of determining a time-independent coefficient $q(x)$ for the case when $A = 0$ in \eqref{eq:1.1} from the DN map. Following this, Cheng and Yamamoto \cite{cheng2002identification} considered the problem of determining the convection term $A(x)$ when $q = 0$ in \eqref{eq:1.1} from a single boundary measurement in two dimensions. In \cite{deng2008identifying} identification of the first-order coefficient in parabolic equation from final measurement data is studied. Choulli and Kian in \cite{choulli2018logarithmic} proved a logarithmic stability estimate for determining the time-dependent zero-order coefficient in a parabolic equation from a partial DN map. 
In \cite{sahoo2019partial}, a unique recovery of both convection term $A(t,x)$ and density $q(t,x)$ appearing in \eqref{eq:1.1} from the boundary measurement by using an exponential weight with linear phase is proved. Bellassoued and Rassas \cite{bellassoued2020stability} uniquely determined both convection term $A(x)$ and density coefficient $q(x)$ by the DN map, and they showed for $n\geq3$ a log-type stability estimate. In \cite{bellassoued2021stably}, Bellassoued and Fraj proved the logarithmic stability estimates in the determination of the two time-dependent first-order convection term and the scalar potential appearing in the dynamical convection-diffusion equation. Senapati and Vashisth \cite{senapati2021stability} proved the stability estimate for determining the convection term $A(t,x)$ and the density coefficient $q(t,x)$ from the knowledge of DN map measured on a portion that is slightly bigger than half of the lateral boundary. Recently, Fan and Duan in \cite{fan2021determining} proved the unique recovery of time-dependent density coefficient $q(t,x)$ for the case when $A = 0$ in \eqref{eq:1.1}. The present work extends the work of \cite{fan2021determining} to the recovery of both convection term $A(t,x)$ and $q(t,x)$ from the knowledge of DN map measured on a very small subset of the boundary. 
 
 We refer to \cite{bellassoued2016inverse,choulli1991abstract,mchoulli1991abstract,choulli2009introduction,choulli2012stability,gaitan2013stability,hoffmann2005generic,isakov1993uniqueness,kamynin2011unique,kamynin2010two,mishra2022inverse,murayama1981gel, NakamuraSasayama+2013+217+232, prilepko1992inverse} and references therein for more works related to the partial data inverse problems for the parabolic equations. We also refer to \cite{ibtissem2017,bellassoued2008stability,cheng2000global,cheng2004determination,ferreira2007determining,kachalov2001inverse,kenig2007determining,kian2016recovery,kian2016stability,kian2017unique,oksanen2017,kian2014carleman,krishnan2020inverse,mishra2021determining,rakesh1988uniqueness,senapati2020stability,sylvester1987global} for related works on inverse problems for the elliptic, hyperbolic and dynamical Schr{\"o}dinger partial differential equations.

To state our problem of interest, we begin by giving some definitions and notations. 
Inspired by \cite{kenig2007determining}, we denote by $ch(\Omega)$  the convex hull of $\Omega$ and for  $x_0 \in \mathbb{R}^n \setminus \overline{ch(\Omega)}$,  we define the front and back sides of the boundary $\partial\Omega$ with respect to $x_0$ by
\begin{align*}
    F(x_0) := \{x \in \partial\Omega: (x-x_0)\cdot\nu(x) \leq 0 \},\\
    B(x_0) := \{x \in \partial\Omega: (x-x_0)\cdot\nu(x) > 0 \}.
\end{align*}
where $\nu(x)$ is the outer unit normal to the boundary point $x$.

\vspace{1mm}
\noindent With this preparation, we are ready to state the main result of this article.
\begin{theorem}\label{th:main theorem}
Let $\Omega \subset \mathbb{R}^n, n \geq 3$ be an open simply connected bounded set with a smooth boundary. Let $q_1, q_2 \in L^{\infty}(\Omega_T)$ be two density coefficients and $A^{(1)}, A^{(2)} \in W^{1,\infty}(\Omega_T)$ be two real vector fields. Suppose that there exists open neighborhood $\widetilde{F}$ of the front side $F(x_0)$ where $x_0, F(x_0)$ defined as above, such that the DN maps corresponding to the operators $\mathcal{L}_{A^{(1)},q_1}$ and $\mathcal{L}_{A^{(2)},q_2}$ coincide on the parts of the boundary near $x_0$ in a sense that 
\begin{align}
\label{eq:1.5}
    \Lambda_{A^{(1)},q_1}(f) =  \Lambda_{A^{(2)},q_2}(f), \quad \forall x \in \widetilde{F}, \quad\forall f \in \mathcal{K}_0,
\end{align}
then there exists a function $\Psi\in W_0^{2,\infty}(\Omega_T)$, such that
\begin{align*}
    A^{(2)}(t,x) = A^{(1)}(t,x) + \nabla_x\Psi(t,x) \quad\text{and} \quad q_2(t,x) = q_1(t,x) + \partial_t\Psi(t,x) \quad \text{in} \quad \Omega_T
\end{align*}
provided $A^{(1)}(t, x) = A^{(2)}(t, x)$ for $(t, x) \in \partial\Omega_T$.
\end{theorem}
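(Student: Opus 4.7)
The plan is to follow the Bukhgeim--Uhlmann reduction adapted to the parabolic setting of \cite{fan2021determining}, combined with the nonlinear logarithmic Carleman weight of \cite{kenig2007determining} so as to accommodate the small measurement set $\widetilde{F}$. First, I would set up the standard integral identity: for $f \in \Kc_0$, let $u_j$ solve \eqref{eq:1.1} with coefficients $(A^{(j)}, q_j)$, and let $v \in H^1(\O_T)$ with $v(T, \cdot) = 0$ solve the adjoint equation $\Lc^*_{A^{(1)}, q_1} v = 0$. Subtracting the two forward problems, integrating by parts, and using \eqref{eq:1.2} yields
\begin{align*}
\langle (\Lambda_{A^{(1)}, q_1} - \Lambda_{A^{(2)}, q_2}) f,\ v|_{\partial\O_T} \rangle = \int_{\O_T} \bigl[ 2(A^{(1)} - A^{(2)}) \cdot \nabla u_2 + R\, u_2 \bigr] \overline{v}\, dx\, dt,
\end{align*}
where $R$ is a zeroth-order term in $A^{(j)}, q_j$. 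By \eqref{eq:1.5}, the left-hand side reduces to a boundary pairing supported on $\partial\O_T \setminus \widetilde{F}$.

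\textbf{CGO construction, Carleman estimate, and inversion.} For a point $x_0 \in \Rb^n \setminus \overline{ch(\O)}$ I would use the weight $\varphi(x) = \log|x - x_0|$ together with a conjugate phase $\psi$ (satisfying $|\nabla\psi| = |\nabla\varphi|$ and $\nabla\varphi \cdot \nabla\psi = 0$) to construct CGO-type solutions
\begin{align*}
u_2 = e^{-\tau(\varphi - i\psi)}(a_2 + r_\tau),\qquad v = e^{\tau(\varphi + i\psi)}(b + s_\tau)
\end{align*}
of $\Lc_{A^{(2)}, q_2} u_2 = 0$ and $\Lc^*_{A^{(1)}, q_1} v = 0$ respectively. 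The leading amplitudes $a_2, b$ would be determined by first-order transport equations along the rays emanating from $x_0$ (incorporating $A^{(2)}$ and $A^{(1)}$), and the remainders $r_\tau, s_\tau$ would be shown to be $O(\tau^{-1})$ in $L^2(\O_T)$ via a Carleman estimate with weight $\varphi$. Because $\partial_\nu \varphi < 0$ on $F(x_0)$ and $> 0$ on $B(x_0)$, a boundary Carleman inequality absorbs the piece of the boundary pairing living on $B(x_0) \cap (\partial\O_T \setminus \widetilde{F})$, while the piece on $F(x_0)$ is killed by the hypothesis. Sending $\tau \to \infty$ and extracting the leading term then gives a family of vanishing integrals of $(A^{(1)} - A^{(2)})$ against $\nabla(\varphi - i\psi)$, i.e.\ an attenuated ray-type transform. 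Varying $x_0$ in an open cone outside $\overline{ch(\O)}$ together with the auxiliary phase $\psi$, and inverting in the spirit of \cite{kenig2007determining,fan2021determining}, would force $A^{(2)} - A^{(1)} = \nabla_x \Psi$ for some scalar $\Psi$; the simple connectedness of $\O$ together with the matching condition $A^{(1)} = A^{(2)}$ on $\partial\O_T$ then lets me take $\Psi \in W^{2,\infty}_0(\O_T)$. A second run of the argument on the gauge-equivalent pair $(A^{(1)} + \nabla_x \Psi,\ q_1 + \partial_t \Psi)$, extracting the next order in $\tau$, will identify $q_2 - q_1 - \partial_t \Psi = 0$.

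\textbf{Main obstacle.} The hard part will be the boundary Carleman estimate for the parabolic operator $\Lc_{A, q}$ with the logarithmic weight $\varphi$: unlike the classical linear-phase case, one is forced to work with curved level sets, so the symbolic bookkeeping is heavier and the first-order perturbation $A \cdot \nabla$ must be absorbed into the main estimate without losing powers of $\tau$, all while producing a sign-definite boundary term whose positive part is concentrated on $B(x_0)$. A secondary subtlety is choosing the CGO amplitude transport equation in a function class rich enough to close the ray-transform inversion, and keeping careful track of the quadratic term $|A^{(j)}|^2$ appearing in \eqref{eq:1.2} so that it does not contaminate the recovery of $q_2 - q_1$ at the next order.
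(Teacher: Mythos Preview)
Your outline follows the paper's strategy closely: same integral identity, same logarithmic limiting Carleman weight $\varphi(x)=\log|x-x_0|$, CGO solutions with amplitudes solving transport equations carrying the convection term, a boundary Carleman estimate to control the unknown portion of the Neumann data on $B(x_0)$, and a two-step recovery (first $dA$, then $q$ after gauging). Two points, however, need sharpening.

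First, and most importantly, the paper does \emph{not} use a purely spatial phase $e^{\pm\tau(\varphi+i\psi)}$. The Carleman weight is $\phi(t,x)=\varphi(x)\,\eta(t;h)$ with $\eta(t;h)=\sin\bigl(h^{2/5}(T-t)^2\bigr)$, and accordingly the CGO ansatz is $e^{\frac{1}{h}(\varphi+i\psi)\eta(t;h)}(a+r)$. This time modulation is the key parabolic ingredient: since $\eta(T)=\eta_t(T)=0$, all the $t=T$ boundary contributions generated by integration by parts in the Carleman computation vanish, which is precisely what allows the boundary Carleman estimate to close for functions with $u(0,\cdot)=0$ but $u(T,\cdot)$ unprescribed. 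With a time-independent weight those terms would survive with the wrong sign. The exponent $2/5$ is chosen so that the extra error $h(\varphi+i\psi)\eta_t$ produced in the CGO equation is $O(h^{7/5})$, giving a remainder $\|r\|_{L^2(0,T;H^1_{\mathrm{scl}})}=O(h^{2/5})$ rather than $O(h)$; correspondingly one multiplies the integral identity by $h^{3/5}$ (not $h$) before sending $h\to0$, and the boundary term is $O(h^{-1/2})$, so $h^{3/5}\cdot h^{-1/2}\to0$. Your identification of the boundary Carleman as the main obstacle is right, but the resolution of that obstacle is exactly this time modulation, which you should build into the ansatz from the start.

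Second, the inversion step does not proceed by ``varying $x_0$ in an open cone''. The point $x_0$ is fixed; one varies the direction $\omega\in\Sb^{n-1}$ defining $\psi$ and, crucially, multiplies the amplitude by an arbitrary holomorphic factor $g$ satisfying $(\nabla\varphi+i\nabla\psi)\cdot\nabla g=0$, then invokes the complex-geometrical-optics argument of \cite{ferreira2007determining,kenig2007determining} to conclude $dA^{(1)}=dA^{(2)}$ in $\Omega$ for each $t$. Simple connectedness of $\Omega$ together with the boundary matching then yields $A^{(2)}-A^{(1)}=\nabla_x\Psi$ with $\Psi\in W^{2,\infty}_0(\Omega_T)$. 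For the density, the paper replaces $(A^{(1)},q_1)$ by the gauge-equivalent pair $(A^{(1)}+\nabla_x\Psi,\,q_1+\partial_t\Psi)$ and reruns the identity; since the first-order difference now vanishes, the boundary term is $O(h^{1/2})\to0$ directly and one reads off $q_2=q_1+\partial_t\Psi$ from the surviving amplitude integral---so this is not really a ``next order in $\tau$'' extraction, but a repetition of the leading-order argument after gauging.
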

Note that the recovery of the density, as well as the convection term, is up to the natural obstruction to the uniqueness. These obstructions can be removed (or modified) with some prior assumptions on the convection term, as we discuss below, in the form of two corollaries. 
\begin{corollary}\label{Cor: Corollary 2}
    Let $\Omega, x_0, F(x_0)$ defined as above theorem \ref{th:main theorem}. Let $q_1, q_2 \in L^{\infty}(\Omega_T)$ and $A^{(1)}, A^{(2)} $ be two real vector fields independent of time. Suppose that there exist open neighborhood $\widetilde{F}$ of the front side $F(x_0)$ such that 
\begin{align*}
 \Lambda_{A^{(1)},q_1}(f) =  \Lambda_{A^{(2)},q_2}(f), \quad \forall x \in \widetilde{F}, \quad\forall f \in \mathcal{K}_0,
\end{align*}
then if $A^{(1)}$ and $A^{(2)}$ are viewed as 1-forms ($\sum_{j=1}^{n} A_j dx_j$), we have
\begin{align*}
    dA^{(1)} = dA^{(2)} \quad\text{and} \quad q_1(t, x) = q_2(t, x) \quad \text{in} \quad \Omega_T
\end{align*}
provided $A^{(1)}(x) = A^{(2)}(x)$ for $x \in \partial\Omega$.
\end{corollary}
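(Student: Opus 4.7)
The plan is to invoke Theorem~\ref{th:main theorem} as a black box and then exploit the time-independence of $A^{(1)}$ and $A^{(2)}$ together with the boundary vanishing of the resulting gauge function to separate variables. Since the $A^{(j)}$ do not depend on $t$, the hypothesis $A^{(1)}|_{\partial\Omega} = A^{(2)}|_{\partial\Omega}$ is equivalent to the lateral boundary condition $A^{(1)}|_{\partial\Omega_T} = A^{(2)}|_{\partial\Omega_T}$ required in Theorem~\ref{th:main theorem}. Applying the theorem therefore yields $\Psi \in W_0^{2,\infty}(\Omega_T)$ with
\begin{equation*}
A^{(2)}(x) - A^{(1)}(x) = \nabla_x \Psi(t,x), \qquad q_2(t,x) - q_1(t,x) = \partial_t \Psi(t,x) \quad \text{in } \Omega_T.
\end{equation*}

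Next I would observe that the left-hand side of the first identity does not depend on $t$, so $\nabla_x \partial_t \Psi \equiv 0$, which means $\partial_t \Psi$ is a function of $t$ alone. Integrating gives the separation of variables
\begin{equation*}
\Psi(t,x) = \Phi(x) + \psi(t)
\end{equation*}
for some $\Phi$ on $\Omega$ and $\psi$ on $(0,T)$. At this stage the identity $dA^{(1)} = dA^{(2)}$ already drops out for free, because $A^{(2)} - A^{(1)} = \nabla_x \Phi$ is exact as a $1$-form and hence closed.

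To finish, I would use the trace condition $\Psi|_{\partial\Omega_T} = 0$ encoded in the assumption $\Psi \in W_0^{2,\infty}(\Omega_T)$. On $\partial\Omega_T$ the identity $\Phi(x) + \psi(t) = 0$ forces $\Phi$ to be constant on $\partial\Omega$ (by varying $x$ at fixed $t$) and simultaneously forces $\psi$ to be constant on $(0,T)$ (by varying $t$ at fixed $x \in \partial\Omega$). Hence $\partial_t\Psi = \psi'(t) \equiv 0$, and therefore $q_2 = q_1$ in $\Omega_T$.

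The main obstacle here is really nothing beyond Theorem~\ref{th:main theorem} itself; once the gauge function $\Psi$ is supplied, the remainder is the short structural separation-of-variables argument sketched above. The only point that needs mild care is interpreting the $W_0^{2,\infty}$ boundary trace strongly enough that the separated form $\Phi(x) + \psi(t)$ can literally be evaluated on $\partial\Omega_T$; this is standard, since $W^{2,\infty}$ functions are Lipschitz continuous up to the boundary so the identity $\Phi(x) + \psi(t) = 0$ holds pointwise on $\partial\Omega_T$.
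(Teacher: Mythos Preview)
Your argument is correct. The route differs slightly from the paper's, though the underlying observation is the same.

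The paper does not invoke Theorem~\ref{th:main theorem} as a black box. Instead it returns to the intermediate conclusion $dA^{(1)}=dA^{(2)}$ obtained inside the proof of the theorem (equation~\eqref{eq: dA}), and, since both vector fields are time-independent, it selects from the outset a time-independent primitive $\Psi(x)\in W_0^{2,\infty}(\Omega)$ with $A^{(2)}-A^{(1)}=\nabla_x\Psi$. Then $\partial_t\Psi=0$ holds trivially, and re-running the density-recovery step of Section~\ref{Proof of main theorem} with this $\Psi$ yields $q_1=q_2$.

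Your approach is more self-contained: you accept the full output of the theorem (a possibly $t$-dependent $\Psi$ together with the relation $q_2-q_1=\partial_t\Psi$), and then argue \emph{a posteriori}, via the separation $\Psi(t,x)=\Phi(x)+\psi(t)$ and the vanishing of $\Psi$ on the lateral boundary, that $\psi'\equiv 0$. This avoids re-entering the proof of Theorem~\ref{th:main theorem} and re-applying the density-recovery argument. Both routes exploit the same structural point---time-independence of the $A^{(j)}$ forces the gauge to be (essentially) time-independent---so the difference is one of packaging rather than substance; your version has the advantage of treating the main theorem as a genuine black box.
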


\begin{corollary}\label{Cor: Corollary 3}
   Let $(A^{(1)},q_1), (A^{(2)}, q_2)$ be two sets of time-dependent coefficients such that $q_i \in L^{\infty}(\Omega_T)$ and $A^{(i)} \in W^{1,\infty}(\Omega_T)$ for $i = 1,2$. If
 \begin{align}\label{eq: divergence condition}
     \nabla_x\cdot A^{(1)}(t,x) = \nabla_x\cdot A^{(2)}(t,x) \quad (t,x)\in \Omega_T 
 \end{align} 
 and
\begin{align*}
 \Lambda_{A^{(1)},q_1}(f) =  \Lambda_{A^{(2)},q_2}(f), \quad \forall x \in \widetilde{F}, \quad\forall f \in \mathcal{K}_0,
\end{align*}
then we have
\begin{align*}
    A^{(1)}(t, x) = A^{(2)}(t, x) \quad\text{and} \quad q_1(t, x) = q_2(t, x) \quad \text{in} \quad \Omega_T
\end{align*}
provided $A^{(1)}(t, x) = A^{(2)}(t, x)$ for $(t, x) \in \partial\Omega_T$.
\end{corollary}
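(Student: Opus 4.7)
The plan is to deduce Corollary 3 directly from Theorem 1 by using the divergence hypothesis \eqref{eq: divergence condition} to pin down the gauge function $\Psi$.

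First I would invoke Theorem \ref{th:main theorem}. The hypotheses of the corollary include $A^{(1)} = A^{(2)}$ on $\partial\Omega_T$ together with equality of the DN maps on $\widetilde{F}$, so Theorem \ref{th:main theorem} applies and produces a function $\Psi \in W_0^{2,\infty}(\Omega_T)$ satisfying
\begin{align*}
A^{(2)}(t,x) = A^{(1)}(t,x) + \nabla_x \Psi(t,x), \qquad q_2(t,x) = q_1(t,x) + \partial_t \Psi(t,x), \qquad (t,x)\in\Omega_T.
\end{align*}
The entire task then reduces to showing $\Psi \equiv 0$ in $\Omega_T$.

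Next I would take the $x$-divergence of the first identity. Using \eqref{eq: divergence condition}, which gives $\nabla_x\cdot A^{(1)} = \nabla_x \cdot A^{(2)}$, I obtain
\begin{align*}
\Delta_x \Psi(t,x) = 0 \quad \text{in } \Omega_T.
\end{align*}
Since $\Psi \in W_0^{2,\infty}(\Omega_T)$, its trace vanishes on $\partial\Omega_T$, so for every fixed $t \in (0,T)$ the function $x \mapsto \Psi(t,x)$ is harmonic on the bounded set $\Omega$ with zero Dirichlet data on $\partial\Omega$. By uniqueness for the Dirichlet problem for the Laplacian, $\Psi(t,\cdot) \equiv 0$ in $\Omega$ for almost every $t$, and hence $\Psi \equiv 0$ in $\Omega_T$.

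Consequently $\nabla_x \Psi \equiv 0$ and $\partial_t \Psi \equiv 0$, which immediately yields $A^{(1)} = A^{(2)}$ and $q_1 = q_2$ in $\Omega_T$. There is no real obstacle here beyond a careful reading of the trace class $W_0^{2,\infty}(\Omega_T)$; the substantive content is entirely located in Theorem \ref{th:main theorem}, and the divergence constraint is precisely strong enough to eliminate the gauge ambiguity through the uniqueness of harmonic extensions slice by slice in time.
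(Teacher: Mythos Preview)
Your proposal is correct and follows essentially the same approach as the paper: both use the divergence condition to show that the gauge function $\Psi$ is harmonic in $x$ with zero Dirichlet data, hence vanishes by uniqueness for the Laplace equation. The only cosmetic difference is that you invoke Theorem~\ref{th:main theorem} in full to obtain $q_2 = q_1 + \partial_t\Psi$ directly, whereas the paper re-runs the density-recovery argument after establishing $A^{(1)}=A^{(2)}$; your route is slightly more economical but the content is the same.
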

    The result mentioned in the above theorem can be seen as the extension of the work of \cite{fan2021determining} where a problem of recovering $q(t,x)$ appearing in $\mathcal{L}_{A,q}$ when $A=0$ is studied. The recovery of both $A$ and $q$ has been studied by several other people as well. For example, in \cite{sahoo2019partial}, a uniqueness result with a smallness assumption on $A$ is proved for recovering both $A$ and $q$ from boundary data measured on an open subset of boundary which is slightly bigger than half of the $\partial\Omega$ which was further improved in \cite{senapati2021stability} to derive a stability estimate for recovering both $A$ and $q$.  In this work, we were able to prove the unique recovery of $A$ and $q$ without the smallness assumption on $A$ and in our case the boundary data is measured on a sufficiently small open subset of the the boundary. Our result and idea of the proof are mainly inspired by the work of \cite{choulli2018logarithmic,fan2021determining,ferreira2007determining,sahoo2019partial}.

A brief outline of this article is as follows. In section \ref{Carleman estimate}, we prove an interior and boundary Carleman estimates for the operator $\mathcal{L}_{A,q}$, which will be helpful in the construction of the geometric optics solutions for $\mathcal{L}_{A,q}$ and its formal $L^2-$adjoint $\mathcal{L}_{A,q}^{*}$. Using the interior Carlemann estimate derived in \ref{Carleman estimate}, in section \ref{Construction of solutions}, we construct the geometric optics solutions for $\mathcal{L}_{A,q}$ and $\mathcal{L}_{A,q}^{*}$ by solving an eikonal and the transport equation. 
Section \ref{Proof of main theorem} is devoted to proving the main result given in Theorem \ref{th:main theorem} of the present article and in Section \ref{Proof of Corollary 2 and 3}, we prove the Corollary \ref{Cor: Corollary 2} and Corollary \ref{Cor: Corollary 3}.


\section{Carleman estimates}\label{Carleman estimate}
This section is devoted to proving interior and boundary Carleman estimates for the operator $\mathcal{L}_{A,q}$. These estimates are required to construct the geometric optics solutions for the operator $\mathcal{L}_{A,q}$. We start by defining some notations which will be used throughout the article.
\begin{align*}
    \|u\|^2 &= \|u\|^2_{L^2(\Omega_T)} = \int_{0}^{T}\int_{\Omega} |u(t,x)|^2 dx dt\\
    (u,v) &= \int_{0}^{T}\int_{\Omega} u\overline{v} dx dt \\
    \|u\|^2_{L^2(\Omega)} &= \int_{\Omega} |u(t,x)|^2 dx.
\end{align*}

Let us rewrite the operator $\Lc_{A,q}$ as follows:
\begin{align*}
    \mathcal{L}_{A,q}u 
    &=\left(\partial_t - \Delta  - 2 A\cdot\nabla - (\nabla\cdot A) - |A|^2 + q\right)u\\
    &= \Lc_A u + \Tilde{q}u 
\end{align*}
where $$\mathcal{L}_{A} u:=(\partial_t - \Delta - 2 A\cdot\nabla)u \text{ and } \Tilde{q} := -\nabla\cdot A - |A|^2 + q.$$

\noindent To prove the estimates, we will be using semiclassical notations. For $h > 0$, define $P_0 = h^2(\partial_t - \Delta-2A\cdot\nabla)$ and  $\phi(t,x) = \varphi(x)\eta(t;h)$,  $\phi \in C^{\infty}((0,T)\times\Tilde{\Omega};\mathbb{R})$, with $\nabla_x\phi(t,x) \neq 0$,  $\forall (t,x) \in  (0,T)\times\Tilde{\Omega}$, where $\Tilde{\Omega} \subset \Rb^n$ be an open set containing $\O$. Now, consider the conjugated operator
\begin{align}
\label{eq:2.1a}
e^{\frac{\phi}{h}}\circ P_0 \circ e^{-\frac{\phi}{h}}u &=  h^2(u_t - \Delta u) + 2h \nabla_x\phi\cdot\nabla u - h(\phi_t - \Delta\phi)u - \nabla_x\phi\cdot\nabla_x\phi u \nonumber \\
&\qquad + h^2\bigg(2A\cdot\frac{\nabla_x\phi}{h}u - 2A\cdot\nabla u\bigg)\nonumber\\
& = (P + iQ)u + iR_tu + Su
\end{align}
where 
\begin{subequations}
\begin{align}
\label{eq:2.1b}
P & = -h^2\Delta - \nabla_x\phi\cdot\nabla_x\phi, \\
\label{eq:2.1c}
Q & =  2h \nabla_x\phi\cdot D - i h \Delta\phi, \quad D = (D_{x_1},\dots ,D_{x_n})  \mbox{ and } D_{x_j} = \frac{1}{i}\partial_{x_j}\\
\label{eq:2.1d}
R_t & = -i h^2 \partial_t + ih\phi_t,\\
\label{eq:2.1e}
S & = h^2\bigg(2A\cdot\frac{\nabla_x\phi}{h} - 2A\cdot\nabla \bigg).
\end{align}
\end{subequations}
Note that  $P, Q$ are self-adjoint operators with the principal Weyl symbols
\begin{align*}
    p =\xi^2 - \nabla_x\phi\cdot\nabla_x\phi, \quad q = 2\nabla_x\phi\cdot \xi.
\end{align*}
Since $\nabla_x\phi(t,x) \neq 0$, therefore $\phi$ is a limiting Carleman weight in the sense that
\begin{align*}
    \{p,q\}(x,\xi) = 0,  \mbox{ for all } (x, \xi) \mbox{ such that } p(x,\xi) = q(x,\xi) = 0,
\end{align*}
where $\{p,q\}$ is the Poisson bracket.\\

\noindent Our first step is to derive the Carleman estimate for the operator $P_0$ with a weight function $\phi(t,x)$ and then use the obtained estimate to derive the required Carleman estimate for $\Lc_{A,q}$. To do so, $\phi$ needs to satisfy additional terminal conditions, that is $\eta(T) =\eta_t(T) = 0$. Therefore, we assume, $\phi(t,x) = \varphi(x)\sin(h^{2/5}(T-t)^2)$, the power $2/5$ of $h$ is needed to estimate various terms in the upcoming analysis.

\subsection{Interior Carleman estimate}
To obtain an interior Carleman estimate, assume $u\in 
 C^{\infty}_0(\Omega_T)$, for $\Omega \Subset \Tilde{\Omega}$. Then, we use
 $\Tilde{u}=e^{\frac{\phi}{h}}u,  \Tilde{v}=e^{\frac{\phi}{h}}v$ and
 $P_0u = v$ in \eqref{eq:2.1a} to get
 \begin{align}
      ((P + i Q) + iR_t + S)\Tilde{u} = \Tilde{v}.
 \end{align}
This relation gives the following straightforward inequality:
 \begin{align}
 \label{eq:2.3}
     \|\Tilde{v}\|^2 &= \|((P + iQ) + iR_t + S)\Tilde{u}\|^2\nonumber\\& \geq{\frac{1}{2}\|((P + iQ) + iR_t)\Tilde{u}\|^2 - \|S\Tilde{u}\|^2}.
 \end{align}
We will estimate both the terms on the right-hand side of the above equation. Consider 
\begin{align}
 \|\left((P + i Q) + iR_t\right)\Tilde{u}\|^2&= \langle \left(P + i Q)+ iR_t\right)\Tilde{u}, ((P + i Q) + iR_t)\Tilde{u})\rangle \nonumber\\
 &= \langle (P + i Q)\Tilde{u},(P + i Q)\Tilde{u}\rangle + \langle(P + i Q)\Tilde{u},iR_t\Tilde{u}\rangle\nonumber \\&\quad\qquad+ \langle iR_t\Tilde{u},(P + i Q)\Tilde{u}\rangle + \langle R_t\Tilde{u},R_t\Tilde{u}\rangle\nonumber\\
 &= \|P\Tilde{u}\|^2 + \|Q\Tilde{u}\|^2 + i\langle [P,Q]\Tilde{u},\Tilde{u}\rangle  + \|R_t\Tilde{u}\|^2\nonumber\\ &\qquad\quad+ 2 Re\langle (P + i Q)\Tilde{u},iR_t\Tilde{u}\rangle   \label{eq:2.4}.
\end{align}
Since $u\in C^{\infty}_0((0,T)\times\Omega),\  \Omega \Subset \Tilde{\Omega}$, we use  \cite[Equation 3.11]{kenig2007determining} to get 
\begin{align}
\label{eq:2.5}
    \|P\Tilde{u}\|^2 + \|Q\Tilde{u}\|^2 + i\langle[P,Q]\Tilde{u},\Tilde{u}\rangle\geq \frac{2}{3}(\|P\Tilde{u}\|^2 + \|Q\Tilde{u}\|^2) - O(h^2)\|\Tilde{u}\|^2.
\end{align}
The next aim is to estimate the last two terms of \eqref{eq:2.4} by using the operators defined in \eqref{eq:2.1b}--\eqref{eq:2.1d}.
Consider 
\begin{align*}
2 Re\langle (P + i Q)\Tilde{u},iR_t\Tilde{u}\rangle  &= 2 Re\int_{0}^{T}\int_{\Omega} (P + i Q)\Tilde{u}\overline{iR_t\Tilde{u}} \ dx dt \\
&= 2 Re \int_{0}^{T}\int_{\Omega} -h^4\Delta\Tilde{u}\overline{\Tilde{u}_t} \ dxdt + 2Re\int_{0}^{T}\int_{\Omega}-h^2|\nabla_x\phi|^2\Tilde{u}\overline{\Tilde{u}_t} \ dxdt \\ & \quad + 2Re \int_{0}^{T}\int_{\Omega}2h^3 \nabla_x\phi\cdot\nabla\Tilde{u}\overline{\Tilde{u}_t} \ dxdt + 2 Re \int_{0}^{T}\int_{\Omega}h^3\Delta\phi\Tilde{u}\overline{\Tilde{u}_t} \ dxdt \\ & \quad +  2 Re \int_{0}^{T}\int_{\Omega}h^3\phi_t\Delta\Tilde{u}\overline{\Tilde{u}} \ dxdt + 2 Re \int_{0}^{T}\int_{\Omega}h|\nabla_x\phi|^2\phi_t\Tilde{u}\overline{\Tilde{u}} \ dxdt \\&\quad + 2 Re \int_{0}^{T}\int_{\Omega}-2h^2\phi_t\nabla_x\phi\cdot\nabla\Tilde{u}\overline{\Tilde{u}} \ dxdt + 2 Re \int_{0}^{T}\int_{\Omega}-h^2\phi_t\Delta\phi\Tilde{u}\overline{\Tilde{u}} \ dxdt\\
    &= I_1 + I_2 + \cdots + I_8.
\end{align*}
Now, we need to estimate the right-hand side term by term of the above relation.
\begin{align*}
I_1 &= 2 Re \int_{0}^{T}\int_{\Omega} -h^4\Delta\Tilde{u}\overline{\Tilde{u}_t} \ dx dt  = 2 Re \int_{0}^{T}\int_{\Omega} h^4 \nabla \Tilde{u}\cdot \nabla \overline{\Tilde{u}_t} \ dx dt\\
    &= \int_{0}^{T}\int_{\Omega} (h^4\nabla\Tilde{u}\cdot\overline{\Tilde{u}_t} + h^4 \nabla \Tilde{u}_t \cdot \nabla\overline{\Tilde{u}} ) \ dx dt = \int_{0}^{T}\int_{\Omega} h^4 |\nabla \Tilde{u} |^2_t \ dx dt = 0.\\
I_2 &= 2Re\int_{0}^{T}\int_{\Omega}-h^2|\nabla_x\phi|^2\Tilde{u}\overline{\Tilde{u}_t} \ dx dt = \int_{0}^{T}\int_{\Omega}(-h^2|\nabla_x\phi|^2\Tilde{u}\overline{\Tilde{u}_t} 
 - h^2|\nabla_x\phi|^2\overline{\Tilde{u}}\Tilde{u}_t) \ dxdt\\
 &= \int_{0}^{T}\int_{\Omega}-h^2|\nabla_x\phi|^2|\Tilde{u}|^2_t \ dx dt  = \int_{0}^{T}\int_{\Omega} h^2|\nabla_x\phi|^2_t|\Tilde{u}|^2\ dx dt \leq O(h^2) \|\Tilde{u}\|^2.\\
 I_3 &= 2Re \int_{0}^{T}\int_{\Omega}2h^3 \nabla_x\phi\cdot\nabla\Tilde{u}\overline{\Tilde{u}_t} \ dxdt \leq 4 h^3 \max_{(t,x)\in [0,T]\times\overline{\Omega}}|\nabla_x\phi| \int_{0}^{T}\int_{\Omega} |\nabla\Tilde{u}||\overline{\Tilde{u}_t}| \ dx dt\\
&\leq 4 h^3 \max_{(t,x)\in [0,T]\times\overline{\Omega}}|\nabla_x\phi| \||\nabla\Tilde{u}|\| \|\Tilde{u}_t\|.\\
I_4 &=   2 Re \int_{0}^{T}\int_{\Omega}h^3\Delta\phi\Tilde{u}\overline{\Tilde{u}_t} \ dxdt = \int_{0}^{T}\int_{\Omega}(h^3\Delta\phi\Tilde{u}\overline{\Tilde{u}_t} + h^3\Delta\phi\Tilde{u}_t\overline{\Tilde{u}}) \ dx dt\\
  &=  \int_{0}^{T}\int_{\Omega}h^3\Delta\phi|\Tilde{u}|^2_t \ dx dt \leq \max_{(t,x)\in [0,T]\times\overline{\Omega}} |(\Delta\phi)_t| \|\Tilde{u}\|^2\leq O(h^3)\|\Tilde{u}\|^2.\\
   I_5 &= 2 Re \int_{0}^{T}\int_{\Omega}h^3\phi_t\Delta\Tilde{u}\overline{\Tilde{u}} \ dxdt  = - 2 Re \int_{0}^{T}\int_{\Omega}h^3\phi_t\nabla\Tilde{u}\cdot\overline{\nabla\Tilde{u}} \ dxdt - 2 Re \int_{0}^{T}\int_{\Omega}h^3\nabla_x\phi_t\cdot \nabla\Tilde{u}\overline{\Tilde{u}} \ dxdt \hspace{1.1cm}\\
  & =- 2 Re \int_{0}^{T}\int_{\Omega}h^3\phi_t|\nabla\Tilde{u}|^2\ dx dt -2 Re \int_{0}^{T}\int_{\Omega}h^3\nabla_x\phi_t\cdot\nabla\Tilde{u}\overline{\Tilde{u}} \ dxdt \leq O(h^3)\||\nabla\Tilde{u}|\|^2 + O(h^2)\|\Tilde{u}\|^2.\\
  I_6 &= 2 Re \int_{0}^{T}\int_{\Omega}h|\nabla_x\phi|^2\phi_t\Tilde{u}\overline{\Tilde{u}} \ dxdt =  2 \int_{0}^{T}\int_{\Omega}h|\nabla_x\phi|^2\phi_t|\Tilde{u}|^2\ dxdt\\
& \leq  2 h\int_{0}^{T}\int_{\Omega} h^{4/5}T^4|\nabla\varphi|^2 h^{2/5}T^2\varphi|\Tilde{u}|^2 dx dt \leq O(h^2) \|\Tilde{u}\|^2.\\
 \end{align*}

\begin{align*}
I_7 &=  2 Re
\int_{0}^{T}\int_{\Omega}-2h^2\phi_t\nabla_x\phi\cdot\nabla\Tilde{u}\overline{\Tilde{u}} \ dxdt \leq 4h^2 \max_{(t,x)\in [0,T]\times\overline{\Omega}}|\phi_t|\max_{(t,x)\in [0,T]\times\overline{\Omega}}|\nabla_x\phi|\| |\nabla\Tilde{u}| \| \|\Tilde{u}\|\\
&\leq O(h^2)\| |\nabla\Tilde{u}| \|^2 + O(h^2) \|\Tilde{u}\|^2.\\
I_8 &= 2 Re \int_{0}^{T}\int_{\Omega} -h^2\phi_t\Delta\phi\Tilde{u}\overline{\Tilde{u}} \ dxdt \leq O(h^2)\|\Tilde{u}\|^2. 
\end{align*}
Hence by combining and using all the above estimates, we obtain
\begin{align}
\label{eq:2.6}
     2 Re\langle(A + i B)\Tilde{u},iR_t\Tilde{u}\rangle \leq  O(h^2)\|\Tilde{u}\|^2 + 4h^3 \underset{(t,x)\in[0,T]\times\Omega}{\max}|\nabla_x\phi|\||\nabla\Tilde{u}|\|\|\Tilde{u}_t\|+O(h^2)\||\nabla\Tilde{u}|\|^2.
\end{align}
Next, we use the definition of the operator \eqref{eq:2.1d} to obtain
\begin{align}
\label{eq:2.7}
 \|R_t\Tilde{u}\|^2 = \int_{0}^{T}\int_{\Omega}R_t\Tilde{u}\overline{R_t\Tilde{u}} \ dxdt \geq h^4\|\Tilde{u}_t\|^2 - O(h^2)\|\Tilde{u}\|.
\end{align}
Substituting the obtained estimates in \eqref{eq:2.4} from \eqref{eq:2.5} and then combining with \eqref{eq:2.6} and \eqref{eq:2.7}, we get
\begin{align*}
     \|((P + iQ) + iR_t)\Tilde{u}\|^2 &\geq  \frac{2}{3}(\|P\Tilde{u}\|^2 + \|Q\Tilde{u}\|^2) +  h^4\|\Tilde{u}_t\|^2 - 4h^3 \underset{(t,x)\in[0,T]\times\Omega}{\max}|\nabla_x\phi|\||\nabla\Tilde{u}|\|\|\Tilde{u}_t\| \\
     & \qquad -O(h^2)\||\nabla\Tilde{u}|\|^2 - O(h^2)\|\Tilde{u}\|.
\end{align*}
Using the Cauchy-Schwarz inequality,
\begin{align}
\label{eq:Cauchy_Schwarz_inequality}
4h^3 \underset{(t,x)\in[0,T]\times\Omega}{\max}|\nabla_x\phi|\||\nabla\Tilde{u}|\|\|\Tilde{u}_t\| \leq \frac{1}{2}h^4\|\Tilde{u}_t\|^2 + O(h^2)\||\nabla\Tilde{u}|\|^2.
\end{align}
and a priori estimate (followed by ellipticity of $P$) for a fixed $t$,
\begin{align}
\label{eq:2.8}
\|h|\nabla\Tilde{u}|\|^2_{L^2{(\Omega)}}\leq O(1)\big(\|P\Tilde{u}\|^2_{L^2{(\Omega)}} + \|\Tilde{u}\|^2_{L^2{(\Omega)}}\big).
\end{align}
Finally, we have the following estimate for the first term on the right side of \eqref{eq:2.3}:
\begin{align}
\label{eq:2.9}
     \|((P + i Q) + iR_t)\Tilde{u}\|^2\geq \frac{1}{2}(\|P\Tilde{u}\|^2 + \|Q\Tilde{u}\|^2) + \frac{1}{2}h^4\|\Tilde{u}_t\|^2 - O(h^2)\|\Tilde{u}\|^2.
\end{align}
Now, we estimate the second term on the right side of \eqref{eq:2.3} with the help of the operator \eqref{eq:2.1e}
\begin{align}
    \|S\Tilde{u}\|^2 & = \int_{0}^{T}\int_{\Omega}|2hA\cdot\nabla_x\phi\Tilde{u}-2h^2A\cdot\nabla\Tilde{u}|^2\ dx dt  \leq 8 h^2 \|A\|^2_{\infty}(\|h\nabla\Tilde{u}\|^2  + \underset{(t,x)\in[0,T]\times\Omega}{\max}|\nabla_x\phi|\|\Tilde{u}\|^2) \nonumber\\
    & \leq O(h^2)(\|P\Tilde{u}\|^2 + \|\Tilde{u}\|^2). \quad (\text{by using the relation \eqref{eq:2.8}})    \label{eq:2.10}
\end{align}
By using estimates \eqref{eq:2.9} and \eqref{eq:2.10} in Equation \eqref{eq:2.3}, we obtain
\begin{align}
\label{eq:2.11}
    \|\Tilde{v}\|^2\geq C(\|P\Tilde{u}\|^2 + \|Q\Tilde{u}\|^2) + \frac{1}{4}h^4\|\Tilde{u}_t\|^2 - O(h^2)\|\Tilde{u}\|^2,
\end{align}
for some constant $C > 0$.\\
\vspace{.2mm}\\
We need to modify the Carleman weight $\phi$ to absorb $O(h^2)\|\Tilde{u}\|^2$ in the above relation \eqref{eq:2.11}. To do this, let us consider a modified weight
\begin{align}
\label{eq:2.12}
    \phi_{\epsilon} = \phi + \frac{\epsilon \phi^2}{2}, \qquad 0\leq \epsilon \ll 1.
\end{align}
Corresponding to the modified weight $\phi_{\epsilon}$, the operators $P, Q, R_t$ and $S$ become $P_{\epsilon}, Q_{\epsilon},R_{t\epsilon}$ and $S_{\epsilon}$ respectively. For this modified Carleman weight, let $\hat{u} = e^{\phi_{\epsilon}/h}u$, $\hat{v} = e^{\phi_{\epsilon}/h}v$, when $P_0u = v$. Following similar calculations, we can 
\begin{align}
\label{eq:2.13}
\|\hat{v}\|^2 & \geq \frac{h}{2}(4\epsilon + O(\epsilon^2))\int_{0}^{T}\int_{\Omega}|\nabla_x\phi|^4|\hat{u}|^2 dxdt + 
\frac{1}{4}(\|P_{\epsilon}\hat{u}\|^2 + \|Q_{\epsilon}\hat{u}\|^2) + \frac{1}{4}h^4\|\hat{u}_t\|^2 - O(h^2)\|\hat{u}\|^2\nonumber \\
& - 8 h^2 \|P_{\epsilon}\|^2_{\infty}\|h\nabla\hat{u}\|^2  - 8 h^2 \|P_{\epsilon}\|^2_{\infty}\underset{(t,x)\in[0,T]\times\Omega}{\max}|\nabla_x\phi_\epsilon|\|\hat{u}\|^2. 
\end{align}
If we choose  $h \ll \epsilon^2 \ll 1$ then the relation \eqref{eq:2.13} gives
\begin{align}
\label{eq:2.14}
 \|\hat{v}\|^2 & \geq \frac{h\epsilon}{C_0}(\|\hat{u}\|^2 + \|hD\hat{u}\|^2) + \bigg(\frac{1}{4} - O(\epsilon h)\bigg)\|P_{\epsilon}\hat{u}\|^2 + \frac{1}{4}\|Q_{\epsilon}\hat{u}\|^2 + \frac{1}{4}h^4\|\hat{u}_t\|^2. 
\end{align}
To get the above inequality in terms of $\Tilde{u}$ and $\Tilde{v}$, we write $\phi_{\epsilon} = \phi + \epsilon \phi_1$, where $\phi_1$ and all its derivatives are bounded. With this change, we have $\hat{u} = e^{\frac{\epsilon \phi_1}{h}}\Tilde{u}, \hat{v} = e^{\frac{\epsilon \phi_1}{h}}\Tilde{v}$ and the following estimate (for details, please see \cite[Page 9]{fan2021determining}): 

\begin{align}
   \|e^{\frac{\epsilon \phi_1}{h}}\Tilde{v}\|^2 \geq \frac{h\epsilon}{C_0}\bigg(\|e^{\frac{\epsilon \phi_1}{h}}\Tilde{u}\|^2 + \|e^{\frac{\epsilon \phi_1}{h}}hD\Tilde{u}\|^2\bigg). 
\end{align}
Let us take $\epsilon = C'h$ with fixed $C'\gg1$, we get $e^{\frac{\epsilon \phi_1}{h}}$ is uniformly bounded in $\Omega_T$ and therefore we obtain
\begin{align}
\label{eq:2.16}
    h^2\left(\|\Tilde{u}\|^2 + \|hD\Tilde{u}\|^2\right) \leq C\|\Tilde{v}\|^2.
\end{align}
This gives the interior Carleman estimate for the operator $P_0$. The required interior Carleman estimate for the operator $P_0 + h^2 \Tilde{q}$ ($\Tilde{q} \in L^\infty(\O_T)$) is obtained by replacing $\Tilde{v}-h^2\Tilde{q}\Tilde{u}$ for $\Tilde{v}$ in \eqref{eq:2.16}. More specifically, the required interior Carleman estimate is given by
\begin{align}
\label{eq:2.18}
   h\left(\|e^{\frac{\phi}{h}}u\| + \|hD(e^{\frac{\phi}{h}}u)\|\right) \leq C\|e^{\frac{\phi}{h}} h^2 \mathcal{L}_{A,q}u\| 
\end{align}
where $C = C(\Omega)$ and $h>0$ is small enough.\\
We can also derive the interior Carleman estimate for $e^{-\frac{\phi}{h}}u$, by following a similar calculation as above, we get (same as changing $\phi$ into $-\phi$ in \eqref{eq:2.18})
\begin{align}
\label{eq:2.19}
 h\left(\|e^{-\frac{\phi}{h}}u\| + \|hD(e^{-\frac{\phi}{h}}u)\|\right) \leq C\|e^{-\frac{\phi}{h}} h^2 \mathcal{L}_{A,q}u\| 
\end{align}
for some constant $C > 0$ independent of $h$.

\subsection{Boundary Carleman estimate}
In this subsection, we focus on Carleman estimates involving the boundary terms, which will be needed to estimate the boundary terms appearing in the integral identity where we do not have the measurements. Let $P_0u=v$, for $u \in C^{\infty}(\Omega_T)$ such that  $u|_{\partial\Omega} = 0$ and $ u(0,x) = 0$. Further, we take
\begin{align*}
    \hat{u} = e^{\frac{\phi}{h}}u, \quad \hat{v} = e^{\frac{\phi}{h}}v, \quad \phi = \phi_{\epsilon}, 
\end{align*}
with
\begin{align*}
    P = P_{\epsilon},\quad Q = Q_{\epsilon},\quad R_t = R_{t\epsilon}, \quad S = S_{\epsilon}
\end{align*}
Then, $\hat{u}$ and $\hat{v}$ satisfy the following equation:
\begin{align}
   \left((P + i Q) + iR_t + S\right)\hat{u} = \hat{v},
\end{align}
which implies
\begin{align}
\label{eq:2.20}
    \|\hat{v}\|^2 = \|((P + i Q) + iR_t + S)\hat{u}\|^2 \geq{\frac{1}{2}\|((P + i Q) + iR_t)\hat{u}\|^2 - \|S\hat{u}\|^2}.
\end{align}
Now consider
\begin{align}
\|((P + i Q) + iR_t)\hat{u}\|^2 & = \langle((P + i Q) + iR_t)\hat{u}, ((P + i Q) + iR_t)\hat{u}\rangle \nonumber\\
 & = \langle(P + i Q)\hat{u},(P + i Q)\hat{u}\rangle + \langle(P + i Q)\hat{u},iR_t\hat{u}\rangle + \langle iR_t\hat{u},(P + i Q)\hat{u}\rangle\nonumber\\&\qquad +\|R_t\hat{u}\|^2\nonumber\\
 \label{eq:2.21}
 & = \|P\hat{u}\|^2 + \|Q\hat{u}\|^2 + i (\langle Q\hat{u},P\hat{u}\rangle -\langle P\hat{u},Q\hat{u}\rangle)  + 2 Re\langle (P + i Q)\hat{u},iR_t\hat{u}\rangle \nonumber\\
 &\qquad + \|R_t\hat{u}\|^2.
\end{align}
Using the definition of operator $B$ and $\hat{u}|_{\partial\Omega} = e^{\frac{\phi}{h}}u|_{\partial\Omega} = 0,$ we get
\begin{align}
\label{eq: IP(A,B)}
 \langle P\hat{u},Q\hat{u}\rangle& = \langle QP\hat{u},\hat{u}\rangle,\\
 \langle Q\hat{u},P\hat{u}\rangle& = \langle Q\hat{u},-h^2\Delta\hat{u}\rangle+ \langle Q\hat{u},(\nabla_x\phi)^2\hat{u}\rangle.\nonumber
\end{align}
Applying integration by parts to get
\begin{align*}
    \langle Q\hat{u},-h^2\Delta\hat{u}\rangle = \langle -h^2
    \Delta Q\hat{u},\hat{u}\rangle + \langle -h^2Q\hat{u},\partial_{\nu}\hat{u}\rangle_{\partial\Omega_T}.
\end{align*}
We decompose $Q$ into its tangential and normal components as follows:
\begin{align*}
    Q = 2(\nabla_x\phi\cdot\nu)\frac{h}{i}\partial_\nu+Q^\prime 
\end{align*}
With this decomposition and boundary condition $\hat{u}|_{\partial\Omega} = 0$, we get
\begin{align*}
\langle -h^2Q\hat{u},\partial_{\nu}\hat{u}\rangle_{\partial\Omega_T} = -\frac{2h^3}{i}\langle(\nabla_x\phi\cdot\nu)\partial_{\nu}\hat{u},\partial_{\nu}\hat{u}\rangle_{\partial\Omega_T}.
\end{align*}
Combining the above equalities to obtain
\begin{align}
\label{eq:2.22}
    \langle Q\hat{u},P\hat{u}\rangle=\langle PQ\hat{u},\hat{u}\rangle -\frac{2h^3}{i}\langle (\nabla_x\phi\cdot\nu)\partial_{\nu}\hat{u},\partial_{\nu}\hat{u}\rangle_{\partial\Omega_T}.
\end{align}
With the help of \eqref{eq: IP(A,B)} and \eqref{eq:2.22}, we can rewrite Equation \eqref{eq:2.21} as follows:
\begin{align}
\label{eq:2.23}
\|((P + iQ) + iR_t)\hat{u}\|^2  = &  \|P\hat{u}\|^2 + \|Q\hat{u}\|^2 +i\langle [P,Q]\hat{u},\hat{u}\rangle-2h^3\langle (\nabla_x\phi\cdot\nu)\partial_{\nu}\hat{u},\partial_{\nu}\hat{u}\rangle_{\partial\Omega_T}+\|R_t\hat{u}\|^2 \nonumber\\ & + 2 Re\langle (P + iQ)\hat{u},iR_t\hat{u}\rangle.
\end{align}
Recall $\phi(t, x) =  \varphi(x) \eta(t;h)$, with $\eta(T) = \eta_t(T) = 0$. Therefore, we have $\phi(T,x) = \nabla_x\phi(T,x) = \Delta\phi(T,x) = \phi_t(T,x) = 0.$
With these conditions in mind and following a similar calculation as done previously (to obtain \eqref{eq:2.6}),  we have
\begin{align}
\label{eq:2.24}
2 Re\langle(P + i Q)\hat{u},iR_t\hat{u}\rangle &\leq h^4\int_{\Omega}|\nabla\hat{u}(T,x)|^2 dx + O(h^2)\||\nabla\hat{u}|\|^2 + O(h^2)\|\hat{u}\|^2 \nonumber \\
&\quad + 4h^3 \underset{(t,x)\in[0,T]\times\Omega}{\max}|\nabla_x\phi|\||\nabla\hat{u}|\|\|\hat{u}_t\|.
\end{align}
Using the definition of the operator $R_t$, we have
\begin{align}
\label{eq:2.25}
 \|R_t\hat{u}\|^2 \geq h^4\|\hat{u}_t\|^2-O(h^2)\|\hat{u}\|^2.
\end{align}
From \eqref{eq:2.24} and \eqref{eq:2.25} with using the Cauchy-Schwartz inequality \eqref{eq:Cauchy_Schwarz_inequality} for $\hat{u}$, we get that
\begin{align}
\label{eq:2.26}
    \|R_t\hat{u}\|^2 + 2 Re\langle(P + i Q)\hat{u},iR_t\hat{u}\rangle \geq \frac{1}{2}h^4\|\hat{u}_t\|^2 + h^4\int_{\Omega}|\nabla\hat{u}(T,x)|^2 dx - O(h^2)\||\nabla\hat{u}|\|^2 - O(h^2)\|\hat{u}\|^2.
\end{align}
Now consider
\begin{align}
\label{eq:2.27}
\|S\hat{u}\|^2 & = \int_{0}^{T}\int_{\Omega}|2hA\cdot\nabla_x\phi\hat{u}-2h^2A\cdot\nabla\hat{u}|^2 dx dt \leq 8 h^2 \|A\|^2_{\infty}(\|h\nabla\hat{u}\|^2  + \underset{(t,x)\in[0,T]\times\Omega}{\max}|\nabla_x\phi|\|\Tilde{u}\|^2) \nonumber\\
    & \leq O(h^2)(\|P\hat{u}\|^2 + \|\hat{u}\|^2).
\end{align}
Let
\begin{align}
\label{eq:2.28}
    \partial\Omega_{\pm} := \{x\in\partial\Omega: \pm\nabla\varphi\cdot\nu\geq 0\}.
\end{align}
Substituting \eqref{eq:2.23} combined with \eqref{eq:2.26}--\eqref{eq:2.28} in \eqref{eq:2.20}, we get
\begin{align}
     \|\hat{v}\|^2 &+ h^3\langle(\nabla_x\phi\cdot\nu)\partial_{\nu}\hat{u},\partial_{\nu}\hat{u}\rangle_{(0,T)\times\partial\Omega_+} \nonumber \\& \geq C\|P\hat{u}\|^2 + \frac{1}{4}\|Q\hat{u}\|^2 + \frac{1}{4}h^4\|\hat{u}_t\|^2 - O(h^2)\|\hat{u}\|^2 - h^3\langle(\nabla_x\phi\cdot\nu)\partial_{\nu}\hat{u},\partial_{\nu}\hat{u}\rangle_{(0,T)\times\partial\Omega_-}\nonumber\\
     & \geq \frac{h\epsilon}{C_0}(\|\hat{u}\|^2 + \|hD\hat{u}\|^2) + \big(C - O(\epsilon h)\big)\|P_{\epsilon}\hat{u}\|^2 + \frac{1}{4}\|Q_{\epsilon}\hat{u}\|^2 - O(h^2)\|\hat{u}\|^2\nonumber \\
     &\qquad - h^3\langle(\nabla_x\phi\cdot\nu)\partial_{\nu}\hat{u},\partial_{\nu}\hat{u}\rangle_{(0,T)\times\partial\Omega_-}.
\end{align}
Let us fix $\epsilon = C'h$ for some $C'\gg1$, with $\phi = \phi_{\epsilon_0}$ for some $C_0 > 0$, we get
\begin{align}
\label{eq:2.30}
    -\frac{h^3}{C_0}\langle\left(\nabla_x\phi\cdot\nu\right)\partial_{\nu}\Tilde{u},\partial_{\nu}\Tilde{u}\rangle_{(0,T)\times\partial\Omega_-} &+ \frac{h^2}{C_0}\left(\|\Tilde{u}\|^2 + \|hD\Tilde{u}\|^2\right) \nonumber \\ &\qquad \leq \|\Tilde{v}\|^2 + C_0h^3\langle(\nabla_x\phi\cdot\nu)\partial_{\nu}\Tilde{u},\partial_{\nu}\Tilde{u}\rangle_{(0,T)\times\partial\Omega_+}.
\end{align}
Note that this is a boundary Carleman estimate for the operator $P_0$. The required boundary Carleman estimate for the operator $P_0 + h^2\Tilde{q}$ for $\Tilde{q} \in L^{\infty}(\Omega_T)$ is obtained by replacing $\Tilde{v}$ by $\Tilde{v}-h^2\Tilde{q}\Tilde{u}$ in  \eqref{eq:2.30}. Therefore, the required boundary Carleman estimate is given by
\begin{align}
\label{eq:2.31}
    -\frac{h^3}{C_0}\langle(\nabla_x\phi\cdot\nu)e^{\phi/h}\partial_{\nu}u,e^{\phi/h}\partial_{\nu}u\rangle_{(0,T)\times\partial\Omega_-} + \frac{h^2}{C_0}(\|e^{\phi/h}u\|^2 + \|e^{\phi/h}hDu\|^2)\nonumber \\ \leq \|e^{\phi/h}h^2(\mathcal{L}_{A,q}u)\|^2 + C_0h^3\langle(\nabla_x\phi\cdot\nu)e^{\phi/h}\partial_{\nu}u,e^{\phi/h}\partial_{\nu}u\rangle_{(0,T)\times\partial\Omega_+}
\end{align}
where $C_0 > 0$ and $0 < h \ll 1$.
This gives a Carleman estimate for $e^{\phi/h}u,$ when $h^2\mathcal{L}_{A,q}u = v$ with $u|_{\partial\Omega} = 0$ and $u(0) = 0.$

We can derive another Carleman estimate by following exactly similar calculations for $e^{-\phi/h}u$ (changing $\phi$ into $-\phi$ in \eqref{eq:2.31}),
\begin{align}
 \label{eq:2.32}
    \frac{h^3}{C_0}\langle(\nabla_x\phi\cdot\nu)e^{-\phi/h}\partial_{\nu}u,e^{-\phi/h}\partial_{\nu}u\rangle_{(0,T)\times\partial\Omega_+} + \frac{h^2}{C_0}(\|e^{-\phi/h}u\|^2 + \|e^{-\phi/h}hDu\|^2)\nonumber \\ \leq \|e^{-\phi/h}h^2(\mathcal{L}_{A,q}u)\|^2 - C_0h^3\langle(\nabla_x\phi\cdot\nu)e^{-\phi/h}\partial_{\nu}u,e^{-\phi/h}\partial_{\nu}u\rangle_{(0,T)\times\partial\Omega_-}.
\end{align}


\section{Construction of  geometric optics solutions}\label{Construction of solutions}
Let us denote the semiclassical Sobolev space of order $s$ on $\mathbb{R}^n$ by $H^s_{scl}(\mathbb{R}^n)$ with the norm
\begin{align*}
    \|u\|^2_{H^s_{scl}(\mathbb{R}^n)} = \|\langle hD\rangle^s u\|^2_{L^2(\mathbb{R}^n)} = \int_{\Rb^n} \big(1 + h^2\xi^2\big)^s |\hat{u}(\xi)|^2\ d\xi,
\end{align*}
And similarly, $H^1_{scl}(\Omega)$ is the Sobolev space on $\Omega$ of order $1$ with the corresponding norm given by
\begin{align*}
    \|u\|^2_{H^1_{scl}(\Omega)} =  \|h\nabla u\|^2_{L^2(\Omega)} + \|u\|^2_{L^2(\Omega)}.
\end{align*}
Further for $s \in \mathbb{R}$, we define the space $L^2(0,T;H^s_{scl}(\mathbb{R}^n))$ by
\begin{align*}
L^2(0,T;H^s_{scl}(\mathbb{R}^n)):= \big\{u(t,\cdot)\in S'(\mathbb{R}^n):\big(1 +|h\xi|^2\big)^{s/2}\hat{u}(t,\xi) \in L^2(\mathbb{R}^n)\big\},
\end{align*}
with the norm 
\begin{align*}
\|u\|^2_{L^2(0,T;H^s_{scl}(\mathbb{R}^n))}:=\int_{0}^{T}\int_{\mathbb{R}^n} \big(1 +|h\xi|^2\big)^s|\hat{u}(t,\xi)|^2\ d\xi dt,
\end{align*}
 where $S'(\mathbb{R}^n)$ denote the space of all tempered distribution on $\mathbb{R}^n$ and $\hat{u}(t,\xi)$ is the Fourier transform with respect to space variable $x\in \mathbb{R}^n$.
 \\
\par The goal of this section is to construct solutions for the convection-diffusion equation. To do so, we take $\psi$ to be the solution of the eikonal equation:
\begin{align}
    |\nabla\psi|^2 = |\nabla\varphi|^2, \quad \nabla\varphi\cdot\nabla\psi = 0.
\end{align}
We will be working with a fixed limiting Carleman weight given by
\begin{align}
\label{eq:3.2}
    \varphi(x) = \frac{1}{2}\log(x-x_0)^2.
\end{align}
Given the above choice of $\varphi$, we have an explicit solution of the eikonal equation
\begin{align}
\label{eq:3.3}
    \psi(x) = d_{\Sb^{n-1}}\bigg(\frac{x-x_0}{|x-x_0|},\omega\bigg), \quad \mbox{ where } \o\in \Sb^{n-1}.
\end{align}
Following the ideas developed in \cite{ferreira2007determining}, for $r_0 > 0$ large enough so that $\overline{\Omega} \subset B(x_0,r_0)$, we define $\Gamma = \{\theta \in \Sb^{n-1} : x_0+r_0\theta \in H^+\}$,  where $H$ is a hyperplane separating $x_0$ and $ch(\Omega)$, and $H^+$ is the open half-space, containing $\overline{\Omega}$ (therefore, $x_0 \notin H^+$). Further, let $\check{\Gamma}$ be the image of $\Gamma$ under the antipodal map and $\Gamma_0$ be a neighborhood of $\omega_0$ in $ \Sb^{n-1}\setminus(\Gamma \cup \check{\Gamma})$. Then the distance map defined on $\Gamma\times \Gamma_0$ by  $(\theta, \omega) \mapsto d_{\Sb^{n-1}}(\theta, \omega)$ is a smooth function. Therefore, for $x \in x_0 + \mathbb{R}_+\Gamma$ we have $\frac{x-x_0}{|x-x_0|} \in \Gamma$ and hence $\psi$ depends smoothly on the variables $(x,\omega)$ on $\left(x_0 + \mathbb{R}_+\Gamma\right)\times \Gamma_0$.

\subsection{Construction of exponentially growing solution}
Recall, 
\begin{align}
\label{eq:3.4}
    \eta(t;h) = \sin{(h^{2/5}(T-t)^2)}.
\end{align}
For such $\varphi, \psi$ and $\eta$ defined above, we obtain
\begin{align*}
    h^2 \mathcal{L}_{A,q}e^{\frac{1}{h}(\varphi+i\psi)\eta}a = e^{\frac{1}{h}(\varphi+i\psi)\eta}(h^2 \mathcal{L}_{A,q} + h(\varphi+i\psi)\eta_t - h\eta L)a
\end{align*}
where $L$ is the transport operator given by
\begin{align*}
    L = (\Delta\varphi + i\Delta\psi) + 2(\nabla\varphi + i\nabla\psi)\cdot\nabla + 2A\cdot(\nabla\varphi+i\nabla\psi).
\end{align*}
We wish to find $a$ satisfying $La = 0$ of the form $a = m(t)e^{\Phi(t,x)}$ for any non-vanishing real-valued differentiable function $m(t)$.  In order to find such a function $a$, $\Phi(t,x)$ must satisfy 
\begin{align}
\label{eq:3.5}
     (\Delta\varphi + i\Delta\psi) + 2(\nabla\varphi + i\nabla\psi)\cdot\nabla\Phi + 2A\cdot(\nabla\varphi+i\nabla\psi) = 0.
\end{align}
For $\varphi+i\psi$ and $m(t)e^{\Phi(t,x)}$, we obtain an approximate solution of the convection-diffusion equation 
\begin{align}
\label{eq:3.6}
     h^2 \mathcal{L}_{A,q}\big(e^{\frac{1}{h}(\varphi+i\psi)\eta}m(t)e^{\Phi(t,x)}\big) = e^{\frac{1}{h}(\varphi+i\psi)\eta}(h^2 \mathcal{L}_{A,q} + h(\varphi+i\psi)\eta_t)m(t)e^{\Phi(t,x)} = e^{\frac{\varphi}{h}\eta(t;h)}O(h^{7/5}).
\end{align}
We can transform this approximate solution into an exact one using the following proposition.
\begin{Prop}\label{Proposition 1}
Let $\phi$ be a limiting Carleman weight and $q\in L^{\infty}(\Omega_T)$. Then for small enough $h>0$ and for all $F\in L^2(\Omega_T)$ there exists a solution $u\in L^2(0,T;H^1(\Omega))\cap H^{1}(0,T;H^{-1}(\Omega))$ of 
\begin{align*}
\left\{
	\begin{array}{ l l l }
  e^{-\frac{\phi}{h}}h^2\mathcal{L}_{A,q}  e^{\frac{\phi}{h}}u = F, \ (t,x)\in\Omega_T\\
    u(0,x) = 0, \ x\in \Omega 
\end{array}
\right.
\end{align*}
satisfying the estimate
\begin{align*}
    h\|u\|_{L^2(0,T;H^1_{scl}(\Omega))}\leq C\|F\|_{L^2(\Omega_T)}.
\end{align*}
\end{Prop}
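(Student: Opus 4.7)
The plan is to follow the classical duality approach for solving PDEs via Carleman estimates. The strategy is to use a suitable adaptation of the Carleman estimates of Section~\ref{Carleman estimate} applied to the formal adjoint of the conjugated operator $P := e^{-\phi/h} h^2 \mathcal{L}_{A,q} e^{\phi/h}$, and then invoke the Hahn-Banach theorem together with the Riesz representation theorem to produce the desired solution $u$.

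First, I would identify the formal $L^2$-adjoint $P^* = e^{\phi/h} h^2 \mathcal{L}_{A,q}^* e^{-\phi/h}$, where $\mathcal{L}_{A,q}^* = -\partial_t - \sum_{j=1}^{n}(\partial_j - A_j)^2 + q$ is a backward parabolic operator. Via the time-reversal $s = T - t$ combined with the substitutions $A \mapsto -A(T-\cdot,\cdot)$ and $q \mapsto q(T-\cdot,\cdot)$, the operator $P^*$ is recast as a forward parabolic operator of the same form as $P$, now with the weight replaced by $\tilde{\phi}(t,x) = \varphi(x)\sin(h^{2/5} t^2)$, which satisfies the companion endpoint conditions $\tilde{\eta}(0) = \tilde{\eta}_t(0) = 0$. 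Reproducing the semiclassical computations of Section~\ref{Carleman estimate} in this time-reversed setting yields the adjoint Carleman estimate
\begin{align*}
    h\left(\|v\|_{L^2(\Omega_T)} + \|hDv\|_{L^2(\Omega_T)}\right) \leq C \|P^* v\|_{L^2(\Omega_T)}
\end{align*}
valid for every $v \in C^{\infty}(\overline{\Omega_T})$ with $v|_{\partial\Omega_T} = 0$ and $v(T,\cdot) = 0$.

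Next, I would apply the standard functional-analytic construction. Let $\mathcal{V}$ denote the space of such admissible test functions and consider the subspace $\mathcal{Y} := P^*(\mathcal{V}) \subset L^2(\Omega_T)$. Define the linear functional $\ell : \mathcal{Y} \to \mathbb{C}$ by $\ell(P^* v) := \langle F, v\rangle_{L^2(\Omega_T)}$. The adjoint Carleman estimate guarantees both that $\ell$ is well-defined (from injectivity of $P^*$ on $\mathcal{V}$) and that it is bounded, since
\begin{align*}
|\ell(P^* v)| \leq \|F\|_{L^2} \|v\|_{L^2} \leq \frac{C}{h}\,\|F\|_{L^2}\,\|P^* v\|_{L^2}.
\end{align*}
Extending $\ell$ to $L^2(\Omega_T)$ by the Hahn-Banach theorem and invoking Riesz representation produces a $u \in L^2(\Omega_T)$ with $\|u\|_{L^2} \leq C h^{-1}\|F\|_{L^2}$ satisfying $\langle u, P^* v\rangle = \langle F, v\rangle$ for every $v \in \mathcal{V}$. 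Integrating by parts in this duality relation shows that $Pu = F$ holds in the distributional sense, while the boundary term at $t = 0$---which is not forced to vanish since $v$ may be nonzero there---precisely encodes the initial condition $u(0,\cdot) = 0$.

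Finally, to promote the plain $L^2$ bound $h\|u\|_{L^2} \leq C\|F\|_{L^2}$ into the full semiclassical Sobolev bound $h\|u\|_{L^2(0,T;H^1_{scl}(\Omega))} \leq C\|F\|_{L^2}$, I would exploit spatial ellipticity in the spirit of the a priori estimate \eqref{eq:2.8}: writing $w = e^{\phi/h} u$, the equation $h^2\mathcal{L}_{A,q}w = e^{\phi/h}F$ together with the already-obtained $L^2$ control yields the extra gradient bound, and the membership $u \in L^2(0,T;H^1(\Omega)) \cap H^1(0,T;H^{-1}(\Omega))$ then follows by reading off $\partial_t u$ algebraically from the PDE. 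The principal technical obstacle is establishing the adjoint Carleman estimate for test functions that are not compactly supported in $\Omega_T$ but only vanish on the parabolic boundary $\partial\Omega_T \cup \{t = T\}$, which requires either the boundary Carleman estimate \eqref{eq:2.31} in its time-reversed version or a careful approximation argument; verifying that the delicate powers of $h$ in the derivation of Section~\ref{Carleman estimate} carry through the time reversal unchanged is the other place where caution is needed.
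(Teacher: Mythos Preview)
Your overall duality scheme (adjoint Carleman estimate $\Rightarrow$ Hahn--Banach $\Rightarrow$ Riesz) is indeed the paper's strategy, but the way you reach the $H^1_{scl}$ bound diverges from the paper at a crucial point, and your route has a gap.

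The paper does \emph{not} first produce $u\in L^2$ and then upgrade. Instead it strengthens the adjoint Carleman estimate itself by shifting the Sobolev index by $-1$ on the right-hand side: using the semiclassical pseudodifferential calculus and the commutation $\langle hD\rangle^{-1}(P_1+P_2)\langle hD\rangle = P_1+P_2$ (applied to $w=\chi\langle hD\rangle^{-1}v$ with a cutoff $\chi$), one obtains
\[
\|v\|_{L^2(0,T;L^2(\mathbb{R}^n))}\;\le\;Ch\,\bigl\|e^{\phi/h}\mathcal{L}^*_{A,q}e^{-\phi/h}v\bigr\|_{L^2(0,T;H^{-1}_{scl}(\mathbb{R}^n))}
\]
for $v\in C^1([0,T];C_0^\infty(\Omega))$ with $v(T,\cdot)=0$. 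With the right-hand side in $H^{-1}_{scl}$, the Riesz representation step lands directly in the dual space $L^2(0,T;H^1_{scl})$, and the estimate $h\|u\|_{L^2(0,T;H^1_{scl}(\Omega))}\le C\|F\|_{L^2}$ falls out immediately. Note also that the paper uses test functions compactly supported in $\Omega$ in the space variable, so no boundary Carleman estimate is needed here.

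Your Step~4, by contrast, tries to recover the missing derivative a posteriori from the equation and \eqref{eq:2.8}. This does not go through: the estimate \eqref{eq:2.8} is for the purely spatial operator $P=-h^2\Delta-|\nabla_x\phi|^2$, but isolating $Pu$ from the full equation $e^{-\phi/h}h^2\mathcal{L}_{A,q}e^{\phi/h}u=F$ leaves on the right-hand side the term $h^2\partial_t u$ (from $R_t$), for which you have no control, as well as first-order spatial terms in $u$ (from $Q$ and $S$). Moreover \eqref{eq:2.8} is stated for compactly supported functions, whereas your $u$ carries no spatial boundary condition. So the upgrade fails, and the index-shifting argument with $\langle hD\rangle^{-1}$ is the missing idea.
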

\renewcommand\qedsymbol{$\square$}
\begin{proof}
To prove the existence, we first prove the following Carleman estimate
\begin{align*}
    \|v\|_{L^2(0,T;L^2(\mathbb{R}^n))} \leq C h \| e^{\frac{\phi}{h}}\mathcal{L}^{*}_{A,q} e^{-\frac{\phi}{h}}v\|_{L^2\big(0,T;H^{-1}_{scl}(\mathbb{R}^n)\big)}
\end{align*}
holds for $v \in C^1([0,T];C^{\infty}_0(\Omega))$ satisfying $v(T,x)=0$.\\
The interior Carleman estimate for the adjoint operator:
\begin{align}\label{adjoint interior carleman estimate}
    \|v\|_{L^2(0,T;H^{1}_{scl}(\mathbb{R}^n))}\leq C h \|e^{\frac{\phi}{h}} \mathcal{L}^{*}_{A,q}e^{-\frac{\phi}{h}}v\|_{L^2{(\Omega_T)}}
\end{align}
Let us consider a modified weight
\begin{align*}
    \phi_{\epsilon} = \phi + \frac{\epsilon \phi^2}{2}, \qquad 0\leq \epsilon \ll 1.
\end{align*}
By using modified weight, consider the conjugated adjoint operator
\begin{align*}
    e^{\frac{\phi_{\epsilon}}{h}}h^2\mathcal{L}^{*}_{A,q} e^{-\frac{\phi_{\epsilon}}{h}}v = &  h^2(-v_t - \Delta v) + 2h \nabla_x\phi_{\epsilon}\cdot\nabla v + h(\partial_t \phi_{\epsilon} + \Delta\phi_{\epsilon})v - \nabla_x\phi_{\epsilon}\cdot\nabla_x\phi_{\epsilon} v \\
    & + h^2\bigg(2A\cdot\nabla - 2A\cdot\frac{\nabla_x\phi_{\epsilon}}{h} \bigg)\nonumber v + \Tilde{q}^{*}v
\end{align*}
where $\Tilde{q}^{*} = \overline{q} + \nabla\cdot A -|A|^2$.\\
We rewrite the above as a sum of three terms:
\begin{align*}
    e^{\frac{\phi_{\epsilon}}{h}}h^2\mathcal{L}^{*}_{A,q} e^{-\frac{\phi_{\epsilon}}{h}}v = P_1v + P_2v + P_3v
\end{align*}
where,
\begin{align*}
& P_1 := -h^2\Delta - \nabla_x\phi_{\epsilon}\cdot\nabla_x\phi_{\epsilon} + h\partial_t \phi_{\epsilon}, \quad P_2:= 2h \nabla_x\phi_{\epsilon}\cdot \nabla + h \Delta\phi - h^2 \partial_t,\\
& P_3 :=  h^2\big( 2 A\cdot\nabla - 2A\cdot\frac{\nabla_x\phi}{h} \big) + h^2\Tilde{q}^{*}.
\end{align*}
Next, we will use the pseudodifferential operator techniques to shift the index by -1 in the above estimate \eqref{adjoint interior carleman estimate}. Let $\Tilde{\Omega} \subset \mathbb{R}^n$ a bounded open set such that $\overline{\Omega} \subset \Tilde{\Omega}$. Fix $w \in C^1([0,T];C^{\infty}_0(\Omega))$ satisfy $w(T,x) = 0$, using the composition of pseudodifferential operators, we have
\begin{align*}
   \langle hD\rangle^{-1}(P_1 + P_2)\langle hD\rangle w = (P_1 + P_2)w.
\end{align*}
For any $w \in C^1([0,T];C^{\infty}_0(\Omega))$, the estimate gives 
\begin{align*}
    \|(P_1 + P_2)\langle hD\rangle w\|_{L^2(0,T;H^{-1}_{scl}(\mathbb{R}^n))} &= \|\langle hD\rangle^{-1}(P_1 + P_2)\langle hD\rangle w \|_{L^2(0,T;L^2(\mathbb{R}^n))}\\ &= \|(P_1 + P_2)w\|_{L^2(0,T;L^2(\mathbb{R}^n))} \geq C \sqrt{\epsilon h}(\|w\| + \|hD(w)\|). 
\end{align*}
Consider 
\begin{align*}
     \|P_3\langle hD\rangle w\|_{L^2(0,T;H^{-1}_{scl}(\mathbb{R}^n))} \leq 2\bigg(h\|A\|_{\infty}\|w\| + h^2\|A\|_{\infty}\|\nabla w\| + h^2 \|\Tilde{q}^{*}\|_{\infty}\|w\|\bigg).
\end{align*}
Hence using the inequality as used in \eqref{eq:2.3} and choosing $\epsilon = C'h$ with fixed $C'\gg1$, we get
\begin{align*}
    \|e^{\frac{\phi_{\epsilon}}{h}}h^2\mathcal{L}^{*}_{A,q} e^{-\frac{\phi_{\epsilon}}{h}}\langle hD\rangle w\|_{L^2(0,T;H^{-1}_{scl}(\mathbb{R}^n))} \geq C h \|w\|_{L^2(0,T;H^{1}_{scl}(\mathbb{R}^n))}.
\end{align*}
Now, let $\chi \in C^{\infty}_0(\Tilde{\Omega})$ such that $\chi = 1$ in $\overline{\Omega}_1$ where $\overline{\Omega}\subset \Omega_1\subset\Tilde{\Omega}$. Taking $w = \chi\langle h D\rangle^{-1}v$ in the above estimate and using
\begin{align*}
    \|(1-\chi)\langle hD\rangle^{-1}v\|_{L^2(0,T;H^{1}_{scl}(\mathbb{R}^n))} = \mathcal{O}(h^{\infty})\|v\|_{L^2(0,T;L^2(\mathbb{R}^n))}.
\end{align*}
and 
\begin{align*}
    \|v\|_{L^2(0,T;L^2(\mathbb{R}^n))} &= \|\langle hD\rangle^{-1}v\|_{L^2(0,T;H^{1}_{scl}(\mathbb{R}^n))}\\
    &\leq \|w\|_{L^2(0,T;H^{1}_{scl}(\mathbb{R}^n))} + \mathcal{O}(h^{\infty})\|v\|_{L^2(0,T;L^2(\mathbb{R}^n))}.
\end{align*}
We obtain
\begin{align*}
     \|v\|_{L^2(0,T;L^2(\mathbb{R}^n))} \leq C h\| e^{\frac{\phi_{\epsilon}}{h}}\mathcal{L}^{*}_{A,q} e^{-\frac{\phi_{\epsilon}}{h}}v\|_{L^2(0,T;H^{-1}_{scl}(\mathbb{R}^n))}.
\end{align*}
 Now, using the fact that $e^{\frac{\epsilon \phi^2}{2h}}$ is uniformly bounded in $\Omega_T$, we conclude
 \begin{align*}
     \|v\|_{L^2(0,T;L^2(\mathbb{R}^n))} \leq C h\| e^{\frac{\phi}{h}}\mathcal{L}^{*}_{A,q} e^{-\frac{\phi}{h}}v\|_{L^2(0,T;H^{-1}_{scl}(\mathbb{R}^n))}.
\end{align*}
holds for $v \in C^1([0,T];C^{\infty}_0(\Omega))$ satisfying $v(T,x)=0$, where $h$ is small.\\
The above estimate, together with the Hahn-Banach theorem and the Riesz representation theorem gives us the solvability, the proof follows by \cite{sahoo2019partial}, \cite{senapati2021stability} and we get there exists a $u\in L^2(0, T; H^1(\Omega))\cap H^{1}(0, T; H^{-1}(\Omega))$ such that 
\begin{align*}
    e^{-\frac{\phi}{h}}h^2\mathcal{L}_{A,q}  e^{\frac{\phi}{h}}u = F, \quad  u(0,x) = 0, \ x\in \Omega, \quad
    h\|u\|_{L^2(0,T;H^1_{scl}(\Omega))}\leq C\|F\|_{L^2((0,T)\times\Omega)}.
\end{align*}
\end{proof}
\noindent From above proposition, there exists a $r(t,x;h) \in L^2(0,T;H^1(\Omega))\cap H^{1}(0,T;H^{-1}(\Omega))$ such that 
\begin{align*}
  e^{-\frac{1}{h}(\varphi+i\psi)\eta(t;h)}  h^2\mathcal{L}_{A,q}e^{\frac{1}{h}(\varphi+i\psi)\eta(t;h)}r(t,x;h) = - h^2 \mathcal{L}_{A,q} a+ h(\varphi+i\psi)\eta_t a= O(h^{7/5})
\end{align*}
and
\begin{align*}
    h\|r\|_{L^2(0,T;H^1_{scl}(\Omega))}\leq O(h^{7/5}).
\end{align*}
\begin{lemma}
 Let $x_0 \in \mathbb{R}^n \setminus \overline{ch(\Omega)}$ and $\varphi(x)$ be a limiting Carleman weight which is given by \eqref{eq:3.2}. Let $\psi(x)$ and  $\eta(t;h)$ is given by \eqref{eq:3.3} and \eqref{eq:3.4} respectively, then there exist $h_0>0$ and $r(t,x;h)$ such that
\begin{align*}
     h\|r\|_{L^2(0,T;H^1_{scl}(\Omega))}\leq O(h^{7/5}),
\end{align*}
and
\begin{align*}
    u = e^{\frac{(\varphi+i\psi)}{h}\eta(t;h)}(a(t,x) + r(t,x;h)),
\end{align*}
satisfying $u(0,x) = 0$ is a solution of $\mathcal{L}_{A,q}u=0$, when $h\leq h_0$, where $a(t,x) = m(t)e^{\Phi(t,x)}$, $\Phi(t,x)$ is a solution of \eqref{eq:3.5} and $m(t)$ is any non-vanishing real valued differentiable function.
\end{lemma}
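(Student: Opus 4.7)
The plan is to substitute the ansatz
\[
u = e^{\frac{1}{h}(\varphi + i\psi)\eta(t;h)} (a(t,x) + r(t,x;h))
\]
into $\mathcal{L}_{A,q} u = 0$ and reduce the problem to a conjugated equation for the remainder $r$ to which Proposition \ref{Proposition 1} can be applied. Setting $\Psi := (\varphi + i\psi)\eta$, a direct computation of the conjugated operator (parallel to the real-weight calculation of Section \ref{Carleman estimate}) gives
\[
e^{-\Psi/h}\,h^{2}\mathcal{L}_{A,q}(e^{\Psi/h} w) = h^{2}\mathcal{L}_{A,q} w + h(\varphi+i\psi)\eta_{t}\,w - h\eta\,L w - |\nabla\Psi|^{2} w,
\]
where $L$ is the transport operator appearing just before \eqref{eq:3.5}. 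The eikonal conditions $|\nabla\varphi|^2 = |\nabla\psi|^2$ and $\nabla\varphi\cdot\nabla\psi = 0$ force $|\nabla\Psi|^2 = 0$, killing the most singular ($h^0$) term. Because $a=m(t)e^{\Phi}$ with $\Phi$ solving \eqref{eq:3.5}, one also has $La=0$, which removes the potentially singular contribution coming from $a$. Hence $\mathcal{L}_{A,q}u = 0$ is equivalent to
\[
e^{-\Psi/h}\,h^{2}\mathcal{L}_{A,q}(e^{\Psi/h} r) = F, \qquad F := -h^{2}\mathcal{L}_{A,q} a - h(\varphi+i\psi)\eta_{t}\,a.
\]

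My next step is to estimate $F$ in $L^2(\Omega_T)$. Since $a$ and its derivatives are uniformly bounded on $\Omega_T$, the first summand is $O(h^2)$. For the second I will use
\[
\eta_{t}(t;h) = -2h^{2/5}(T-t)\cos\!\bigl(h^{2/5}(T-t)^2\bigr) = O(h^{2/5}),
\]
which shows $h(\varphi+i\psi)\eta_t a = O(h^{7/5})$; this is precisely why the fractional power $2/5$ was built into the definition of $\eta$. Altogether $\|F\|_{L^2(\Omega_T)} \leq C h^{7/5}$.

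I would then invoke Proposition \ref{Proposition 1} (in its complex-phase version, with the real weight $\phi$ replaced by $\Psi$) to produce $r \in L^2(0,T;H^1(\Omega)) \cap H^1(0,T;H^{-1}(\Omega))$ solving the conjugated equation together with $r(0,x) = 0$ and
\[
h\|r\|_{L^2(0,T; H^1_{scl}(\Omega))} \leq C \|F\|_{L^2(\Omega_T)} = O(h^{7/5}),
\]
which is the required bound. The initial condition $u(0,x) = 0$ then reduces, in view of $r(0,x)=0$, to $m(0)e^{\Phi(0,x)} = 0$; this is enforced by taking the non-trivial differentiable function $m$ to vanish at $t=0$ (for instance $m(t) = t$).

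The main obstacle I expect is justifying the complex-phase version of Proposition \ref{Proposition 1}: the Carleman estimates of Section \ref{Carleman estimate} were derived for a real limiting weight, so one has to verify that the modified-weight scheme $\phi_\epsilon = \phi + \tfrac{\epsilon}{2}\phi^2$ with $\epsilon = C'h$ still produces a coercive bound when the weight carries a nontrivial imaginary part $i\psi\eta$. The cancellation $|\nabla\Psi|^2 = 0$ furnished by the eikonal equation is exactly what makes this possible: it forces all the additional imaginary contributions that appear in the conjugation to sit one order lower in $h$ than the positivity gain supplied by the modified weight, so they can be absorbed along the same lines as the operator $S$ in the real case.
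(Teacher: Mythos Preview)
Your approach is exactly the paper's: conjugate by the complex phase, use the eikonal relation $|\nabla(\varphi+i\psi)|^2=0$ together with $La=0$ to reduce the source to $F=-h^{2}\mathcal{L}_{A,q}a-h(\varphi+i\psi)\eta_{t}a=O(h^{7/5})$, and then invoke Proposition~\ref{Proposition 1} to manufacture $r$ with the stated semiclassical bound.

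The obstacle you flag is simpler than you suggest, and the paper does not revisit the Carleman machinery for a complex weight either. Set $\tilde r:=e^{i\psi\eta/h}r$; then $e^{(\varphi+i\psi)\eta/h}r=e^{\varphi\eta/h}\tilde r$ and the equation for $r$ becomes
\[
e^{-\varphi\eta/h}\,h^{2}\mathcal{L}_{A,q}\bigl(e^{\varphi\eta/h}\tilde r\bigr)=e^{i\psi\eta/h}F,
\]
to which Proposition~\ref{Proposition 1} applies verbatim with the \emph{real} weight $\phi=\varphi\eta$. Since $|e^{i\psi\eta/h}|=1$ and $h\nabla(e^{-i\psi\eta/h})=-i\eta\,\nabla\psi\,e^{-i\psi\eta/h}$ is uniformly bounded, the $L^{2}(0,T;H^{1}_{scl})$ bound on $\tilde r$ transfers to $r$ with only a fixed multiplicative loss. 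No convexification of a complex weight is required.

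One wrinkle: enforcing $m(0)=0$ clashes with the hypothesis that $m$ be \emph{non-vanishing}. The paper is silent here as well; Proposition~\ref{Proposition 1} yields $\tilde r(0,\cdot)=0$, hence $r(0,\cdot)=0$, so $u(0,\cdot)=e^{\Psi(0,\cdot)/h}a(0,\cdot)$ need not vanish for a nowhere-zero $m$. In the application (Section~\ref{Proof of main theorem}) only density of the products $m_{2}\widetilde m_{1}$ in $L^{2}(0,T)$ is used, which is unaffected by imposing $m(0)=0$, so your reading of ``non-vanishing'' as ``not identically zero'' is the workable one.
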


\subsection{Construction of exponentially decaying solution}
The $L^2$-adjoint operator of $\mathcal{L}_{A,q}$ is as follows:
\begin{align*}
   \mathcal{L}^*_{A,q} = -\partial_t - \sum_{j=1}^{n}(\partial_j - A_j(t,x))^2 + \overline{q}.
\end{align*}
With $\varphi, \psi$ and $\eta$ as before, we can consider
\begin{align*}
    h^2 \mathcal{L}^*_{A,q}e^{\frac{1}{h}(-\varphi+i\psi)\eta}b = e^{\frac{1}{h}(-\varphi+i\psi)\eta}(h^2 \mathcal{L}^*_{A,q} - h(-\varphi+i\psi)\eta_t - h\eta L)b,
\end{align*}
where  $L$ is the transport operator given by
\begin{align*}
    L = (-\Delta\varphi + i\Delta\psi) + 2(-\nabla\varphi + i\nabla\psi)\cdot\nabla - 2A\cdot(-\nabla\varphi+i\nabla\psi).
\end{align*}
We wish to find $b$ satisfying $Lb = 0$ of the form $b = \widetilde{m}(t)e^{\widetilde{\Phi}(t,x)}$ for any non-vanishing real-valued differentiable function $\widetilde{m}(t)$.  In order to find such a function $b$, $\widetilde{\Phi}(t,x)$ must satisfy 
\begin{align}
\label{eq:47}
(-\Delta\varphi + i\Delta\psi) + 2(-\nabla\varphi + i\nabla\psi)\cdot\nabla\widetilde{\Phi} - 2A\cdot(-\nabla\varphi+i\nabla\psi) = 0.
\end{align}
For $- \varphi+i\psi$ and $\widetilde{m}(t)e^{\widetilde{\Phi}(t,x)}$, we obtain an approximate solution of the adjoint equation 
\begin{align*}
     h^2 \mathcal{L}^*_{A,q}\big(e^{\frac{1}{h}(-\varphi+i\psi)\eta}\widetilde{m}(t)e^{\widetilde{\Phi}(t,x)}\big) = e^{\frac{1}{h}(-\varphi+i\psi)\eta}(h^2 \mathcal{L}^*_{A,q} - h(-\varphi+i\psi)\eta_t)\widetilde{m}(t)e^{\widetilde{\Phi}(t,x)} = e^{\frac{-\varphi}{h}\eta}O(h^{7/5}).
\end{align*}
We can transform this approximate solution into the exact solution. The proof of this proposition can be established similarly as for Proposition \ref{Proposition 1}.
\begin{Prop} \label{Proposition 2}
Let $\phi$ be a limiting Carleman weight and $q\in L^{\infty}(\Omega_T)$. Then for small enough $h>0$ and for all $F\in L^2(\Omega_T)$ there exists a solution $v\in L^2(0,T;H^1(\Omega))\cap H^{1}(0,T;H^{-1}(\Omega))$ of
\begin{align*}
\left\{
	\begin{array}{ l l l }
  e^{\frac{\phi}{h}}h^2\mathcal{L}^{*}_{A,q} e^{-\frac{\phi}{h}}v = F, \ (t,x)\in\Omega_T\\
    v(T,x) = 0, \ x\in \Omega 
\end{array}
\right.
\end{align*}
satisfying the following estimate
\begin{align*}
    h\|v\|_{L^2(0,T;H^1_{scl}(\Omega))}\leq C\|F\|_{L^2(\Omega_T)}.
\end{align*}
\end{Prop}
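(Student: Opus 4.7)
The plan is to mirror the proof of Proposition \ref{Proposition 1}, interchanging the roles of $\mathcal{L}_{A,q}$ and $\mathcal{L}^*_{A,q}$, and replacing the initial condition by the terminal one $v(T,x)=0$. The argument splits into two parts: first, a Carleman estimate in negative Sobolev norm for the non-adjoint conjugated operator; second, a duality argument producing $v$.

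First I would establish the estimate
\[
\|u\|_{L^2(0,T;L^2(\mathbb{R}^n))} \leq Ch\, \|e^{-\phi/h}\mathcal{L}_{A,q} e^{\phi/h}u\|_{L^2(0,T;H^{-1}_{scl}(\mathbb{R}^n))}
\]
for every $u\in C^1([0,T];C^{\infty}_0(\Omega))$ with $u(0,x)=0$. The starting point is the interior Carleman estimate \eqref{eq:2.19} which, after setting $w = e^{-\phi/h}u$, takes the $L^2$-$L^2$ form $h\|w\|_{H^1_{scl}} \leq C\|e^{-\phi/h} h^2 \mathcal{L}_{A,q} e^{\phi/h} w\|$. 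To shift the right-hand norm from $L^2$ down to $H^{-1}_{scl}$, I would conjugate by $\langle hD\rangle$, apply the estimate to $\chi\langle hD\rangle^{-1}v$ for a cutoff $\chi\in C^{\infty}_0(\widetilde\Omega)$ identically one on a neighborhood of $\overline\Omega$, and use the pseudodifferential composition identity $\langle hD\rangle^{-1}(P_1+P_2)\langle hD\rangle = P_1+P_2$ together with the modified weight $\phi_\epsilon = \phi+\epsilon\phi^2/2$ and the choice $\epsilon = C'h$, exactly as in Proposition \ref{Proposition 1} but with signs inherited from the non-adjoint operator. The term $P_3$ containing $A\cdot\nabla$ and $\widetilde q$ contributes at most $\mathcal{O}(h)\|w\|_{H^1_{scl}}$, which is absorbed into the Carleman gain $\sqrt{\epsilon h}$.

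Given this Carleman estimate, existence of $v$ follows by a standard duality argument. Define the linear functional
\[
\ell\bigl(e^{-\phi/h} h^2 \mathcal{L}_{A,q} e^{\phi/h} u\bigr) := (u,F)_{L^2(\Omega_T)}
\]
on the subspace $\{e^{-\phi/h} h^2 \mathcal{L}_{A,q} e^{\phi/h} u : u\in C^1([0,T];C^{\infty}_0(\Omega)),\ u(0,x)=0\}$ of $L^2(0,T;H^{-1}_{scl}(\mathbb{R}^n))$. The Carleman estimate makes $\ell$ well-defined and bounded with norm at most $(C/h)\|F\|_{L^2(\Omega_T)}$. The Hahn-Banach theorem extends $\ell$ to the whole space and the Riesz representation theorem produces $v\in L^2(0,T;H^1_{scl}(\Omega))$ satisfying $h\|v\|_{L^2(0,T;H^1_{scl}(\Omega))} \leq C\|F\|_{L^2(\Omega_T)}$. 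Interpreting the resulting identity in the sense of distributions, together with integration by parts in $t$ on $(0,T)$ in which the $t=0$ endpoint drops out because $u(0,x)=0$, simultaneously yields the equation $e^{\phi/h}h^2\mathcal{L}^*_{A,q}e^{-\phi/h}v = F$ and forces the terminal condition $v(T,x)=0$.

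The step I expect to be the main obstacle is the pseudodifferential shift in the first part, since the first-order convection piece $2A\cdot\nabla$ loses a derivative and must be absorbed into the small factor $\sqrt{\epsilon h}$ gained from the modified weight; this requires careful bookkeeping of commutators between $\langle hD\rangle$ and the variable coefficients $A$ and $\widetilde q$. The swap between initial and terminal conditions is a routine consequence of the duality pairing and time integration by parts, rather than a genuine analytic difficulty.
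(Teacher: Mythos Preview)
Your proposal is correct and follows exactly the route the paper takes: the paper's own proof of Proposition~\ref{Proposition 2} consists of a single sentence stating that it ``can be established similarly as for Proposition~\ref{Proposition 1}'', and you have correctly unpacked what ``similarly'' means here---namely, swap $\mathcal{L}_{A,q}$ and $\mathcal{L}^*_{A,q}$, swap the initial condition $u(0,x)=0$ for the terminal condition $v(T,x)=0$, and run the same shifted Carleman estimate plus Hahn--Banach/Riesz duality. Your identification of the needed estimate $\|u\|_{L^2}\leq Ch\|e^{-\phi/h}\mathcal{L}_{A,q}e^{\phi/h}u\|_{H^{-1}_{scl}}$ for test functions with $u(0,x)=0$ is exactly the dual of what is done in Proposition~\ref{Proposition 1}, and your discussion of the pseudodifferential shift and absorption of the convection term matches the paper's treatment there.
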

\noindent From the above proposition \ref{Proposition 2}, we have the following lemma, which states the exact solution for the adjoint operator.
\begin{lemma}
 Let $x_0 \in \mathbb{R}^n \setminus \overline{ch(\Omega)}$ and $\varphi(x)$ be a limiting Carleman weight which is given by \eqref{eq:3.2}. Let $\psi(x)$ and  $\eta(t;h)$ is given by \eqref{eq:3.3} and \eqref{eq:3.4} respectively, then there exist $h_0>0$ and $\widetilde{r}(t,x;h)$ such that
\begin{align*}
     h\|\widetilde{r}\|_{L^2(0,T;H^1_{scl}(\Omega))}\leq O(h^{7/5}),
\end{align*}
and
\begin{align}
\label{eq:48}
    v = e^{\frac{(-\varphi+i\psi)}{h}\eta(t;h)}(b(t,x) + \widetilde{r}(t,x;h)),
\end{align}
satisfying $v(T,x) = 0$ is a solution of $\mathcal{L}^*_{A,q}v=0$, when $h\leq h_0$, where $b = \widetilde{m}(t)e^{\widetilde{\Phi}(t,x)}$, where $\widetilde{\Phi}(t,x)$ satisfies \eqref{eq:47} and $\widetilde{m}(t)$ is any non-vanishing real valued differentiable function.
\end{lemma}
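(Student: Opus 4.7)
The plan is to mirror the construction of the exponentially growing solution from the previous subsection, now for the adjoint operator $\mathcal{L}^*_{A,q}$ and with weight $-\varphi+i\psi$ in place of $\varphi+i\psi$. Since the approximate-solution computation displayed immediately above the statement of Proposition~\ref{Proposition 2} is already in hand, the work reduces to (i) exhibiting a smooth $\widetilde{\Phi}$ solving the transport equation \eqref{eq:47}, and (ii) invoking Proposition~\ref{Proposition 2} to correct the remaining $O(h^{7/5})$ residual by a small remainder $\widetilde r$.

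For step (i), by \eqref{eq:3.2}--\eqref{eq:3.3} the eikonal identities $|\nabla\varphi|=|\nabla\psi|$ and $\nabla\varphi\cdot\nabla\psi=0$ hold on $(x_0+\mathbb{R}_+\Gamma)\times\Gamma_0$, so $-\varphi+i\psi$ defines a holomorphic coordinate there and the first-order operator $(-\nabla\varphi+i\nabla\psi)\cdot\nabla$ is, up to a smooth nonvanishing factor, a $\bar\partial$-operator; in these coordinates \eqref{eq:47} reduces to an inhomogeneous Cauchy--Riemann equation with smooth right-hand side. Using simple connectedness of $\Omega$ (an assumption of Theorem~\ref{th:main theorem}) and the smooth dependence of $\psi$ on $(x,\omega)$ recorded at the start of Section~\ref{Construction of solutions}, the equation is solvable on a neighborhood of $\overline{\Omega}$ by a Cauchy-type integral, yielding a smooth $\widetilde{\Phi}(t,x)$. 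Choose any real differentiable $\widetilde m(t)$ that is not identically zero and satisfies $\widetilde m(T)=0$, and set $b(t,x)=\widetilde m(t)e^{\widetilde{\Phi}(t,x)}$; then $Lb=0$ and $b(T,\cdot)=0$.

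For step (ii), put $\widetilde r_1 := e^{i\psi\eta/h}\widetilde r$ to absorb the purely oscillatory part of the phase. The requirement $\mathcal{L}^*_{A,q}\bigl(e^{(-\varphi+i\psi)\eta/h}(b+\widetilde r)\bigr)=0$ becomes
\[
e^{\phi/h}h^2\mathcal{L}^*_{A,q}e^{-\phi/h}\widetilde r_1 \;=\; F, \qquad \phi := \varphi\eta,
\]
with $F=-e^{\phi/h}h^2\mathcal{L}^*_{A,q}\bigl(e^{-\phi/h}e^{i\psi\eta/h}b\bigr)$. The identity stated just above Proposition~\ref{Proposition 2}, combined with $Lb=0$ and the bound $\eta_t(t;h)=-2h^{2/5}(T-t)\cos(h^{2/5}(T-t)^2)=O(h^{2/5})$, yields $\|F\|_{L^2(\Omega_T)}=O(h^{7/5})$. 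Proposition~\ref{Proposition 2} then supplies $\widetilde r_1$ with $\widetilde r_1(T,x)=0$ and $h\|\widetilde r_1\|_{L^2(0,T;H^1_{scl}(\Omega))}=O(h^{7/5})$; undoing the unitary substitution (and using that $\psi$ is smooth on $\overline{\Omega}$, so $h\nabla(e^{-i\psi\eta/h})=O(1)$) gives the same bound for $\widetilde r$, and the terminal condition $v(T,x)=0$ for $v=e^{(-\varphi+i\psi)\eta/h}(b+\widetilde r)$ follows from $\eta(T)=0$, $\widetilde m(T)=0$, and $\widetilde r_1(T,x)=0$.

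The genuine obstacle is step (i): producing the smooth global $\widetilde{\Phi}$ on a neighborhood of $\overline{\Omega_T}$. This hinges on the existence of a smooth branch of $\psi$ on $(x_0+\mathbb{R}_+\Gamma)\times\Gamma_0$ discussed at the start of Section~\ref{Construction of solutions} together with simple connectedness of $\Omega$, which together let the $\bar\partial$-problem be solved globally. Once $\widetilde{\Phi}$ is in hand, everything else is a direct transcription of the exponentially growing construction to the adjoint setting.
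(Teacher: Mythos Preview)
Your proposal is correct and follows essentially the same route as the paper: the paper gives no separate proof of this lemma, merely stating that it follows from Proposition~\ref{Proposition 2} and the approximate-solution computation displayed just before it, exactly as Lemma~1 followed from Proposition~\ref{Proposition 1}. Your steps (i) and (ii) simply flesh out that implicit argument, and the $\bar\partial$-solution of \eqref{eq:47} you outline is the standard mechanism from \cite{ferreira2007determining,kenig2007determining} that the paper is tacitly invoking.

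One small remark: you impose $\widetilde m(T)=0$ to secure $v(T,x)=0$, whereas the lemma as stated asks for $\widetilde m$ to be ``non-vanishing''. Since $\eta(T;h)=0$ makes the phase factor equal to $1$ at $t=T$ and Proposition~\ref{Proposition 2} forces $\widetilde r_1(T,\cdot)=0$, your extra condition is indeed what the terminal constraint requires; the paper's phrasing is slightly loose here (and the analogous issue occurs for $m(0)$ in Lemma~1). This does not affect the later use of the solutions in Section~\ref{Proof of main theorem}, where $\widetilde m$ only needs to range over a dense set of test functions.
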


\section{Proof of Theorem \ref{th:main theorem}}\label{Proof of main theorem}
As a next step to prove our main theorem, we devote this section to derive an integral identity that will play an essential role in the recovery of  $A$ and $q$. 

For $i = 1, 2$, let $A^{(i)}$ and  $q_i$ be as in the Theorem \ref{th:main theorem} and $u_i$  be the corresponding solution of IBVP for $\mathcal{L}_{A^{(i),q_i}}$ given in equation \eqref{eq:1.1}. More specifically, $u_i$, for $i = 1,2$, satisfy
\begin{equation} \label{eq:4.2}
  \begin{array}{cc}
  \left\{\begin{array}{ r l r}
    \mathcal{L}_{A^{(i),q_i}}u_i(t,x) &= 0, & (t,x)\in \Omega_T \\
	 u_i(0,x) &= 0,  & x \in \Omega   \\
	 u_i(t,x) &= f(t,x), & (t,x) \in \partial\Omega_T.
	\end{array}\right.
  \end{array}
 \end{equation}
To simplify the coming calculation, let us denote by
\begin{equation}\label{eq:4.3}
	\begin{array}{ r l }
 u(t,x) &:= u_1(t,x) - u_2(t,x),\\
     A(t,x) &:= A^{(1)}(t,x) - A^{(2)}(t,x), \\
	\Tilde{q}_i(t,x) &:= - \nabla_x\cdot A^{(i)}(t,x) - |A^{(i)}(t,x)|^2 + q_i(t,x),   \\
	\Tilde{q}(t,x) &:= \Tilde{q}_2(t,x) - \Tilde{q}_1(t,x).
	\end{array}
 \end{equation}
Then $u$ solves the following IBVP with zero initial and boundary conditions:
\begin{equation}\label{eq:4.4}
  \left\{
	\begin{array}{ r l c }
    \mathcal{L}_{A^{(1)},q_1}u(t,x) &= 2A\cdot\nabla_x u_2 + \Tilde{q}u_2, \quad &(t,x)\in \Omega_T \\
	 u(0,x) &= 0,  \quad &x \in \Omega   \\
	 u(t,x) &= 0, \quad &(t,x) \in \partial\Omega_T.
	\end{array}
	\right.
 \end{equation}
Assume that $v_1(t,x)$ is a solution to the following equation for the operator adjoint to $ \mathcal{L}_{A^{(1)},q_1}$ 
\begin{align} \label{eq:52}
   \left\{\begin{array}{rlc}
    \mathcal{L}^*_{A^{(1)},q_1} v_1(t,x) &= 0, &(t,x)\in \Omega_T  \\
       v_1(T, x) &=0,  & x \in \O.
   \end{array}\right.
\end{align}
Now multiplying \eqref{eq:4.4} by $\overline{v_1}(t,x)$ and then integrating over $\Omega_T$ yields 
\begin{align}
 \int_{\Omega_T} (2A\cdot\nabla_x u_2 + \Tilde{q}u_2)\overline{v_1}\ dx dt &= \int_{\Omega_T}  \mathcal{L}_{A^{(1)},q_1}u(t,x) \overline{v_1}(t,x)\ dx dt \nonumber\\
 & = \int_{\Omega_T} u(t,x) \overline{ \mathcal{L}^*_{A^{(1)},q_1} v_1(t,x)}\ dx dt - \int_{\partial\Omega_T} \partial_{\nu}u(t,x) \overline{v_1}(t,x)\ dS_x dt \nonumber\\ 
 &\quad +  \int_{\Omega} u(T,x) \overline{v_1}(T,x)\ dx,\label{eq:4.6}
\end{align}
where, we have used $u|_{\partial\Omega_T} = 0, u|_{t=0} = 0$ and $A^{(1)}(t,x) = A^{(2)}(t,x)$ on $\partial\Omega_T$. Since $v_1$ satisfy equation \eqref{eq:52}, we have
 \begin{align}
 \label{eq:55}
   \int_{\Omega_T} (2A\cdot\nabla_x u_2 + \Tilde{q}u_2)\overline{v_1}\ dx dt =   - \int_{\partial\Omega_T} \partial_{\nu}u(t,x) \overline{v_1}(t,x)\ dS_x dt
 \end{align}
Now using the equations \eqref{eq:1.2} and \eqref{eq:1.3}, the left-hand side of above relation satisfy 
$$    \langle(\Lambda_{A^{(1)},q_1} - \Lambda_{A^{(2)},q_2})(f), v_1|_{\partial\Omega_T}\rangle = - \int_{\Omega_T} (2A\cdot\nabla_x u_2 + \Tilde{q}u_2)\overline{v_1}\ dx dt,  $$
which implies
\begin{align}
    \langle(\Lambda_{A^{(1)},q_1} - \Lambda_{A^{(2)},q_2})(f), v_1|_{\partial\Omega_T}\rangle & =  \int_{\partial\Omega_T} \partial_{\nu}u(t,x) \overline{v_1}(t,x)\ dS_x dt \nonumber\\
    \label{eq:56}
    (\Lambda_{A^{(1)},q_1} - \Lambda_{A^{(2)},q_2})(f)|_{\partial\Omega_T} & = \partial_{\nu}u|_{\partial\Omega_T}.
\end{align}
For an $\epsilon_0>0$, define
\begin{align*}
    \partial\Omega_{+,\epsilon_0} := \{x \in \partial\Omega : \nabla_x\phi\cdot\nu(x) \geq \epsilon_0\},\\
    \partial\Omega_{-,\epsilon_0} := \{x \in \partial\Omega : \nabla_x\phi\cdot\nu(x) < \epsilon_0\}.
\end{align*}
Note $\partial\Omega_{+,\epsilon_0} \subset B(x_0), F(x_0)\subset \partial\Omega_{-,\epsilon_0}$. Choose $\epsilon_0$ small enough such that  $\partial\Omega_{-,\epsilon_0} \subset \widetilde{F}$, then equation \eqref{eq:1.5} implies
\begin{align*}
     \Lambda_{A^{(1)},q_1}f(x) =  \Lambda_{A^{(2)},q_2}f(x), \quad \forall x \in \partial\Omega_{-,\epsilon_0}, \quad\forall f \in \mathcal{K}_0.
\end{align*}
Hence from \eqref{eq:56}, we see
\begin{align*}
    \partial_{\nu}u_1 = \partial_{\nu}u_2 \quad \text{in} \quad (0,T)\times\partial\Omega_{-,\epsilon_0}
\end{align*}
and 
\begin{align*}
    supp( \partial_{\nu}u)|_{\partial\Omega_T} \subset (0,T)\times\partial\Omega_{+,\epsilon_0} \subset (0,T)\times\partial\Omega_+.
\end{align*}
Thus from \eqref{eq:55}, we have
\begin{align}
\label{eq:4.7}
    \int_{\Omega_T} (2A\cdot\nabla_x u_2 + \Tilde{q}u_2)\overline{v_1}\ dx dt = - \int_{(0,T)\times\partial\Omega_{+,\epsilon_0}}  \partial_{\nu}u(t,x) \overline{v_1}(t,x)\ dS_x dt
\end{align}

\begin{lemma}
\label{Lemma 3}
We have the following inequality
\begin{align*}
\bigg| \int_{(0,T)\times\partial\Omega_{+,\epsilon_0}}  \partial_{\nu}u(t,x) \overline{v_1}(t,x)\ dS_x dt\bigg| \leq Ch^{-1/2}. 
\end{align*}
\end{lemma}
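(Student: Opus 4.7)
The plan is to apply the boundary Carleman estimate \eqref{eq:2.32} to $u=u_1-u_2$, exploiting the fact that the partial-data assumption forces $\partial_\nu u$ to vanish on $(0,T)\times\partial\Omega_{-,\epsilon_0}$ (established in the discussion immediately preceding the statement), and then pair the resulting weighted bound on $\partial_\nu u$ against the exponentially decaying geometric optics solution $v_1$ via Cauchy--Schwarz. Concretely, I would take $u_2=e^{(\varphi+i\psi)\eta(t;h)/h}(a_2+r_2)$ as the exponentially growing GO solution constructed in Section \ref{Construction of solutions}, let $u_1$ solve the IBVP \eqref{eq:4.2} with $f=u_2|_{\partial\Omega_T}$, and choose $v_1=e^{(-\varphi+i\psi)\eta(t;h)/h}(b_1+\widetilde r_1)$ from Section 3.2.

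Next I would apply \eqref{eq:2.32} to $u$ with weight $\phi=\varphi\,\eta(t;h)$. Since $\partial\Omega_-\subset\partial\Omega_{-,\epsilon_0}$, the last boundary term in \eqref{eq:2.32} vanishes, and on $\partial\Omega_{+,\epsilon_0}$ one has $\nabla_x\phi\cdot\nu\geq\epsilon_0$, yielding
\begin{align*}
\|e^{-\phi/h}\partial_\nu u\|_{L^2((0,T)\times\partial\Omega_{+,\epsilon_0})}^{2}\leq \frac{C}{h^{3}}\,\|e^{-\phi/h}h^{2}\mathcal{L}_{A^{(1)},q_1}u\|_{L^{2}(\Omega_T)}^{2}.
\end{align*}
Using \eqref{eq:4.4} one has $\mathcal{L}_{A^{(1)},q_1}u=2A\cdot\nabla u_2+\widetilde q\,u_2$. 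Plugging in the explicit form of $u_2$, the growing exponential cancels against $e^{-\phi/h}$ up to the oscillating phase $e^{i\psi\eta/h}$, leaving contributions of the form $h\eta(\nabla(\varphi+i\psi))(a_2+r_2)$, $h^{2}\nabla(a_2+r_2)$, and $h^{2}(a_2+r_2)$. Because $|\eta|\leq Ch^{2/5}$ from \eqref{eq:3.4} and $\|r_2\|_{L^{2}(0,T;H^{1}_{scl})}=O(h^{2/5})$ from the lemma in Section 3.1, the three terms all have $L^{2}(\Omega_T)$-norm $O(h^{7/5})$. Hence $\|e^{-\phi/h}\partial_\nu u\|_{L^{2}((0,T)\times\partial\Omega_{+,\epsilon_0})}\leq Ch^{-1/2}$.

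Finally, observing that $|e^{\phi/h}v_1|=|b_1+\widetilde r_1|$ on the boundary, I would apply Cauchy--Schwarz to get
\begin{align*}
\Bigl|\int_{(0,T)\times\partial\Omega_{+,\epsilon_0}}\!\partial_\nu u\,\overline{v_1}\,dS_x\,dt\Bigr|\leq \|e^{-\phi/h}\partial_\nu u\|_{L^{2}}\cdot\|b_1+\widetilde r_1\|_{L^{2}((0,T)\times\partial\Omega_{+,\epsilon_0})}.
\end{align*}
The factor $\|b_1\|_{L^{2}((0,T)\times\partial\Omega)}$ is $O(1)$ since $b_1=\widetilde m(t)e^{\widetilde\Phi(t,x)}$ is smooth, and the $L^{2}$-trace of $\widetilde r_1$ is controlled by the interpolation trace inequality $\|w\|_{L^{2}(\partial\Omega)}^{2}\leq C\|w\|_{L^{2}(\Omega)}\|w\|_{H^{1}(\Omega)}$ together with the bound $\|\widetilde r_1\|_{L^{2}(0,T;H^{1}_{scl})}=O(h^{2/5})$. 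Combining these factors gives the claimed estimate $Ch^{-1/2}$.

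The main obstacle is the careful bookkeeping of $h$-powers in the last two steps: the right-hand side of the Carleman estimate is small only because of the factor $|\eta|\leq Ch^{2/5}$ built into \eqref{eq:3.4}, and the boundary trace of $\widetilde r_1$ is tame only after combining interpolation with the $h^{2/5}$-smallness of $\|\widetilde r_1\|_{L^{2}}$. In other words, the exponent $2/5$ in the definition of $\eta$ was engineered precisely so that both factors together stay within the $h^{-1/2}$ budget.
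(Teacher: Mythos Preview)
Your proposal is correct and follows essentially the same route as the paper: Cauchy--Schwarz with the splitting $\partial_\nu u\,\overline{v_1}=(e^{-\phi/h}\partial_\nu u)(e^{\phi/h}\overline{v_1})$, followed by the boundary Carleman estimate \eqref{eq:2.32} applied to $u$ (whose right-hand side boundary term drops out since $\partial_\nu u=0$ on $\partial\Omega_{-,\epsilon_0}\supset\partial\Omega_-$), and then the explicit GO form of $u_2$ to control $\|e^{-\phi/h}h^{2}\mathcal{L}_{A^{(1)},q_1}u\|$. The paper carries out exactly these steps in the same order, only writing the Cauchy--Schwarz first and then inserting the weight $\eta\nabla\varphi\cdot\nu/\epsilon_0\geq 1$ before invoking \eqref{eq:2.32}; your version simply reverses the order. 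Your treatment is in fact slightly more careful than the paper's in one place: you justify the boundedness of $\|b_1+\widetilde r_1\|_{L^2((0,T)\times\partial\Omega)}$ via the multiplicative trace inequality combined with $\|\widetilde r_1\|_{L^2(0,T;H^1_{scl})}=O(h^{2/5})$, whereas the paper leaves this implicit.
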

\begin{proof} Recall $\displaystyle \overline{v_1}(t,x) = e^{\frac{-\varphi-i\psi}{h}\eta(t;h)}(\overline{b_1(t,x)+\widetilde{r}_1(t,x;h)})$ (see \eqref{eq:48}) and consider
\begin{align*}
    \bigg| \int_{(0,T)\times\partial\Omega_{+,\epsilon_0}}  \partial_{\nu}u(t,x) \overline{v_1}(t,x)\ dS_x dt\bigg| &\leq \int_{(0,T)\times\partial\Omega_{+,\epsilon_0}} | \partial_{\nu}u(t,x)| e^{-\frac{\varphi\eta}{h}}|(b_1+\widetilde{r}_1)| \ dS_xdt\\
     & \leq \|b_1+\widetilde{r}_1\|_{L^2((0,T)\times \partial\Omega_{+,\epsilon_0})}\bigg( \int_{(0,T)\times\partial\Omega_{+,\epsilon_0}} | \partial_{\nu}u(t,x)|^2 e^{-\frac{2\varphi\eta}{h}}dS_xdt\bigg)^{1/2}.
\end{align*}
That is, 
\begin{align*}
    &\bigg| \int_{(0,T)\times\partial\Omega_{+,\epsilon_0}}  \partial_{\nu}u(t,x) \overline{v_1}(t,x)\ dS_x dt\bigg|^2 \\
    &\qquad \qquad \qquad \leq \|b_1+\widetilde{r}_1\|^2_{L^2((0,T)\times \partial\Omega_{+,\epsilon_0})}\bigg( \int_{(0,T)\times\partial\Omega_{+,\epsilon_0}} | \partial_{\nu}u(t,x)|^2 e^{-\frac{2\varphi\eta}{h}}\ dS_x dt\bigg)\\
    & \qquad \qquad \qquad  \leq \|b_1+\widetilde{r}_1\|^2_{L^2((0,T)\times\partial\Omega_{+,\epsilon_0})} \int_{(0,T)\times\partial\Omega_{+,\epsilon_0}} \frac{\eta\varphi^{'} \cdot \nu}{\epsilon_0}| \partial_{\nu}u(t,x)|^2 e^{-\frac{2\varphi\eta}{h}}\ dS_x dt.
\end{align*}
By using Carleman estimate \eqref{eq:2.32}, we get
\begin{align*}
     \bigg| \int_{(0,T)\times\partial\Omega_{+,\epsilon_0}}  \partial_{\nu}u(t,x) \overline{v_1}(t,x)\ dS_x dt\bigg|^2 &\leq  \frac{C_0 h}{\epsilon_0}\|b_1+\widetilde{r}_1\|^2_{L^2((0,T)\times \partial\Omega_{+,\epsilon_0})}\|e^{-\frac{\varphi\eta}{h}}\mathcal{L}_{A^{(1)},q_1}u\|^2.
\end{align*}
The right-hand side of the above estimate can be further simplified from \eqref{eq:4.4} with expression for $\displaystyle u_2(t,x) = e^{\frac{(\varphi+i\psi)}{h}\eta(t;h)}(m_2(t)e^{\Phi_2(t,x)} + r_2(t,x;h))$, and by a simple calculation, we get 
\begin{align*}
    \bigg| \int_{(0,T)\times\partial\Omega_{+,\epsilon_0}}  \partial_{\nu}u(t,x) \overline{v_1}(t,x)\ dS_x dt\bigg| \leq Ch^{-1/2}.
\end{align*}
\end{proof}
\noindent With the inequality proved in the above Lemma, 
 the left-hand side of equation \eqref{eq:4.7} can be estimated as follows 
\begin{align*}
  \bigg|\int_{\Omega_T} (2A\cdot\nabla_x u_2 + \Tilde{q}u_2)\overline{v_1}\ dx dt \bigg|  \leq Ch^{-1/2}
\end{align*}
Next, multiplying above inequality by $h^{3/5}$ and taking $h \rightarrow 0$, we obtain
\begin{align*}
    h^{3/5}\int_{\Omega_T} (2A\cdot\nabla_x u_2 + \Tilde{q}u_2)\overline{v_1}\ dx dt = 0 \hspace{0.5cm} \text{as} \hspace{0.5cm} h\rightarrow 0
\end{align*}
Putting the expressions of $\displaystyle u_2 = e^{\frac{(\varphi+i\psi)}{h}\eta(t;h)}(m_2(t)e^{\Phi_2} + r_2)$ and $\displaystyle \overline{v_1} =  e^{\frac{(-\varphi-i\psi)}{h}\eta(t;h)}\overline{(\widetilde{m}_1(t)e^{\widetilde{\Phi}_1} + \widetilde{r}_1)}$ in above inequality and by simplifying it further we achieve
\begin{align}
\label{eq:4.8}
 \underset{h\rightarrow 0}{\lim} \int_{\Omega_T} h^{-2/5}(A^{(1)} - A^{(2)})\cdot (\nabla \varphi + i \nabla\psi) \eta(t;h) \widetilde{m}_1(t) m_2(t) e^{\overline{\widetilde{\Phi}_1}+\Phi_2}\ dx dt = 0.
\end{align}
Since $\Phi_2$ and $\widetilde{\Phi}_1$ are solutions of the following equations (see \eqref{eq:3.5} and \eqref{eq:47}):
\begin{align}
\label{eq:4.9}
     2(\nabla\varphi + i\nabla\psi)\cdot\nabla\Phi_2+(\Delta\varphi + i\Delta\psi) + 2A^{(2)}\cdot(\nabla\varphi+i\nabla\psi) = 0.\\
     \label{eq:4.10}
    2(\nabla\varphi - i\nabla\psi)\cdot\nabla\widetilde{\Phi}_1+(\Delta\varphi - i\Delta\psi) - 2A^{(1)}\cdot(\nabla\varphi-i\nabla\psi) = 0.
\end{align}
Adding the complex conjugate of \eqref{eq:4.10} with \eqref{eq:4.9}, we obtain
\begin{align}
& \qquad  2(\nabla\varphi + i\nabla\psi)\cdot(\nabla\Phi_2+\nabla\overline{\widetilde{\Phi}_1})+2(\Delta\varphi + i\Delta\psi) + 2(A^{(2)}-A^{(1)})\cdot(\nabla\varphi+i\nabla\psi) = 0.\nonumber\\
&\Longrightarrow (\Delta\varphi + i\Delta\psi) + (\nabla\varphi + i\nabla\psi)\cdot (\nabla\Phi_2+\nabla\overline{\widetilde{\Phi}_1} + A^{(2)}-A^{(1)}) = 0.\nonumber\\
&\Longrightarrow(\nabla + A^{(2)}-A^{(1)}) \cdot ((\nabla\varphi + i\nabla\psi)  e^{\overline{\widetilde{\Phi}_1}+\Phi_2}) = 0.\nonumber\\
\label{A_1-A_2 difference}
&\Longrightarrow \nabla \cdot ((\nabla\varphi + i\nabla\psi)  e^{\overline{\widetilde{\Phi}_1}+\Phi_2}) = (A^{(1)}-A^{(2)}) \cdot ((\nabla\varphi + i\nabla\psi)  e^{\overline{\widetilde{\Phi}_1}+\Phi_2}).
\end{align}
Substituting this in \eqref{eq:4.8} implies 
\begin{align*}
    \underset{h\rightarrow 0}{\lim} \int_{\Omega_T} h^{-2/5}\nabla \cdot \big((\nabla\varphi + i\nabla\psi)e^{\overline{\widetilde{\Phi}_1}+\Phi_2}\big)\eta(t;h) \widetilde{m}_1(t) m_2(t)\ dx dt = 0.
\end{align*}
Using $\displaystyle \underset{h\rightarrow 0}{\lim} \frac{\eta(t,h)}{h^{2/5}} = (T-t)^2$ and  varying $\widetilde{m}_1(t)$, $m_2(t)$ leads to
\begin{align}
\label{eq:4.11}
    \int_{\Omega} \nabla \cdot \big((\nabla\varphi + i\nabla\psi) e^{\overline{\widetilde{\Phi}_1}+\Phi_2}\big)\ dx  = 0
\end{align}
holds for each $t\in(0,T)$.\\
We may replace $e^{\Phi_2}$ by $e^{\Phi_2}g(x)$ in the expression for $u_2$, if $g$ satisfies $(\nabla\varphi + i\nabla\psi)\cdot\nabla g = 0$, therefore \eqref{eq:4.11} can be replace by
\begin{align}
    \label{eq:4.12}
    \int_{\Omega} g(x)\nabla \cdot \big((\nabla\varphi + i\nabla\psi) e^{\overline{\widetilde{\Phi}_1}+\Phi_2}\big)\ dx  = 0
\end{align}
Now by proceeding in a similar way as in \cite{ferreira2007determining}, we get required relation $dA^{(1)}(t,x) = dA^{(2)}(t,x)$ for all $x\in\Omega$ and for each $t\in(0,T)$.


\subsection{Recovery of density coefficient}
In this subsection, we will prove the uniqueness of the density coefficient  $q$.
We have already shown above that 
\begin{align}\label{eq: dA}
    dA^{(1)} = dA^{(2)}  \quad \forall x\in \Omega \ \mbox{ and for each}\ t\in(0,T).
\end{align}
Since $\Omega$ is simply connected, we get $A^{(2)}(t,x) - A^{(1)}(t,x) = \nabla_x\Psi(t,x)$ where $\Psi(t,x)\in W_0^{2,\infty}(\Omega_T)$. Now if we replace the pair $(A^{(1)},q_1)$ by $(A^{(3)},q_3)$ where $A^{(3)}(t,x) = A^{(1)}(t,x) + \nabla_x\Psi(t,x)$ and $q_3(t,x) = q_1(t,x) + \partial_t\Psi(t,x)$ then using the fact that $\Psi(t,x)\in W_0^{2,\infty}(\Omega_T)$ and Equation \eqref{eq:1.5}, we get $\Lambda_{A^{(3)},q_3} = \Lambda_{A^{(2)},q_2}$. By using the previous arguments and from Equation \eqref{eq:4.7}, we get
\begin{align*}
     \int_{\Omega_T} (q_3-q_2)u_2\overline{v_3}\ dx dt = - \int_{(0,T)\times\partial\Omega_{+,\epsilon_0}}  \partial_{\nu}(u_3 - u_2)(t,x) \overline{v_3}(t,x)\ dS_x dt
\end{align*}
where $u_3$ satisfies \eqref{eq:4.2}, $v_3$ satisfies the adjoint system \eqref{eq:52} with coefficients $A^{(3)},q_3$.
Now using the expression for $\displaystyle u_2 = e^{\frac{(\varphi+i\psi)}{h}\eta(t;h)}(m_2(t)e^{\Phi_2} + r_2)$ and $\displaystyle \overline{v_3} =  e^{\frac{(-\varphi-i\psi)}{h}\eta(t;h)}\overline{(\widetilde{m}_3(t)e^{\widetilde{\Phi}_3} + \widetilde{r}_3)}$, taking $h\rightarrow 0$, we get
 \begin{align*}
     \underset{h\rightarrow 0}{\lim}\int_{\Omega_T} (q_3-q_2) m_2(t)\widetilde{m}_3(t)e^{\overline{\widetilde{\Phi}_3(t,x)}+\Phi_2(t,x)}\ dxdt = 0.
 \end{align*}
For any $g$ satisfying $(\nabla\varphi + i\nabla\psi)\cdot\nabla g = 0$, we can replace $e^{\Phi_2}$ by $ge^{\Phi_2}$, then the above equation becomes
\begin{align}
\label{eq:66}
    \underset{h\rightarrow 0}{\lim}\int_{\Omega_T} (q_3-q_2) m_2(t)\widetilde{m}_3(t)g(x)e^{\overline{\widetilde{\Phi}_3(t,x)}+\Phi_2(t,x)}\ dxdt = 0. 
\end{align}
Since $A^{(2)}(t,x) = A^{(3)}(t,x)$, from \eqref{A_1-A_2 difference} we get $\overline{\widetilde{\Phi}_3}+\Phi_2$ is independent of time. Let us take 
\begin{align}
\label{eq:67}
    H(x) = \int_{0}^{T}(q_3-q_2)m_2(t)\widetilde{m}_3(t)\ dt,
\end{align}
therefore, we have from Equation \eqref{eq:66}
\begin{align*}
    \int_{\Omega}  H(x)g(x)e^{\overline{\widetilde{\Phi}_3}+\Phi_2}\ dx = 0.
\end{align*}
Following the proof of the recovering of potential in section 6 of \cite{kenig2007determining}, we can immediately derive that $H(x) = 0$. From \eqref{eq:67}, since $m_2(t)$ and $\widetilde{m}_3(t)$ are arbitrary, we obtain $q_2(t,x) = q_1(t,x) + \partial_t\Psi(t,x) $.
\section{Proof of Corollary  \ref{Cor: Corollary 2} and  \ref{Cor: Corollary 3}} \label{Proof of Corollary 2 and 3}
\subsection{Proof of Corollary \ref{Cor: Corollary 2}}
Using the fact that $A^{(1)}, A^{(2)} $ are independent of time, from \eqref{eq: dA} we get
\begin{align}
   A^{(2)}(x) - A^{(1)}(x) = \nabla_x\Psi(x) 
\end{align}
where $\Psi(x)\in W_0^{2,\infty}(\Omega)$. We have $\Psi$ independent of time, i.e., $\partial_t\Psi = 0$. By applying the same arguments as above for recovery of density coefficient, we get $q_1(t,x) = q_2(t,x)$ for $(t,x)\in \Omega_T$.

\subsection{Proof of Corollary \ref{Cor: Corollary 3}}
We have $\Psi \in  W_0^{2,\infty}(\Omega_T)$ such that
\begin{align}\label{difference of A}
   A^{(2)}(t,x) - A^{(1)}(t,x) = \nabla_x\Psi(t,x) 
\end{align}
Now using the condition \eqref{eq: divergence condition} and Equation \eqref{difference of A}, we get
\begin{align}
    \left\{
	\begin{array}{ r l l }
   \Delta_x\Psi(t,x) &= 0, \quad &(t,x)\in \Omega_T \\
	 \Psi(t,x) &= 0,   \quad & (t,x) \in \partial\Omega_T.
	\end{array}
	\right. 
\end{align}
Using the well-posedness of the above equation, we have $ \Psi(t,x) = 0$ for $(t,x)\in \Omega_T$. Thus from Equation \eqref{difference of A}, we get $A^{(2)}(t,x) = A^{(1)}(t,x)$ for $(t,x)\in \Omega_T$ and repeating the arguments for recovery of density coefficient, we get $q_1(t,x) = q_2(t,x), (t,x)\in \Omega_T$.
\section{Acknowledgements}\label{sec:acknowledge}
I am thankful to Prof. Manmohan Vashisth and Prof. Rohit Kumar Mishra for their helpful discussions and many valuable suggestions that helped me improve the paper. The University Grant Commission is gratefully acknowledged for the Ph.D. scholarship.

\bibliographystyle{plain}
\bibliography{Sample}
\end{document}